\definecolor{MyCyan}{HTML}{00F9DE}
\long\def\@savemarbox#1#2{\global\setbox#1\vtop{\hsize\marginparwidth 
  \@parboxrestore\tiny\raggedright #2}}
\newcommand{\RR}{\mathbb{R}}  
\newcommand{\ZZ}{\mathbb{Z}}  
\newcommand{\TT}{\mathbb{T}}
\newcommand{\calP}{\mathcal{P}}
\renewcommand{\SS}{\mathbb{S}}
\newcommand{\bdy}{\partial}
\newcommand{\Lk}{\mathrm{Lk}}
\renewcommand{\setminus}{{\smallsetminus}}
\newcommand{\from}{\colon\thinspace} 
\newtheorem{theorem}{Theorem}[section]
\newtheorem*{theorem*}{Theorem}
\newtheorem{proposition}[theorem]{Proposition}
\newtheorem{lemma}[theorem]{Lemma}
\newtheorem*{namedtheorem}{\theoremname}
\newcommand{\theoremname}{testing}
\newenvironment{named}[1]{\renewcommand{\theoremname}{#1}\begin{namedtheorem}}{\end{namedtheorem}}
\theoremstyle{definition}
\newtheorem{definition}[theorem]{Definition}
\newtheorem{remark}[theorem]{Remark}
\newcommand{\refthm}[1]{Theorem~\ref{Thm:#1}}
\newcommand{\reflem}[1]{Lemma~\ref{Lem:#1}}
\newcommand{\refprop}[1]{Proposition~\ref{Prop:#1}}
\newcommand{\refeqn}[1]{\eqref{Eqn:#1}}
\newcommand{\refdef}[1]{Definition~\ref{Def:#1}}
\newcommand{\refsec}[1]{Section~\ref{Sec:#1}}
\newcommand{\reffig}[1]{Figure~\ref{Fig:#1}}
\title[On the geometry of rod packings in the 3-torus]{On the geometry of rod packings \\in the 3-torus} 
\author{Connie On Yu Hui}
\address[]{School of Mathematics, Monash University, VIC 3800, Australia }
\email[]{onyu.hui@monash.edu}
\author{Jessica S. Purcell}
\address[]{School of Mathematics, Monash University, VIC 3800, Australia }
\email[]{jessica.purcell@monash.edu}
\subjclass[2020]{57K10, 57K32, 57K35 (primary); 57Z15 (secondary).}
\begin{document} 

\begin{abstract} 
Rod packings in the 3-torus encode information of some crystal structures in crystallography. They can be viewed as links in the $3$-torus, and tools from 3-manifold geometry and topology can be used to study their complements. In this paper, we initiate the use of geometrisation to study such packings. We analyse the geometric structures of the complements of simple rod packings, and find families that are hyperbolic and Seifert fibred. 
\end{abstract} 

\maketitle

\section{Introduction} 

One motivation for studying knots and links in the $3$-torus comes from crystallography, which  is a branch of chemistry and materials science that studies the structure and properties of crystalline materials.
A unit cell in crystallography describes a repeating pattern of particles in crystals, with repetitions occurring as translations along axes in $\RR^3$. The $3$-torus $\TT^3$ can be viewed as a $3$-dimensional cube with opposite faces glued; its universal cover is obtained by translations of the unit cell in three dimensions. Thus we regard the unit cell of a crystal as a $3$-torus, with particles in the unit cell depicted as points or solid spheres embedded in the $3$-torus. A 3-dimensional crystal structure is said to be 3-periodic, meaning it exhibits translational symmetry in three dimensions. 

In 1977, O'Keeffe and Andersson~\cite{OKeeffe-Andersson:RodPCrystalChem} observed that many crystal structures, including some common ones that have resisted other descriptions, can be more simply described in terms of what they call rod packings. Roughly, a rod packing is a packing of uniform cylinders, which represent linear or zigzag chains of atoms or chains of connected polyhedra. The paper describes some of the simpler cases of rod packings and their utility in describing crystal structures, which include what they call the cubic rod packing. In 2001, O'Keeffe \emph{et~al}~\cite{OKeeffeEtAl:CubicRodPackings} provided more results about cubic rod packings, classifying some of the simplest such structures in terms of arrangements in Euclidean space (the invariant ones; see \refsec{RodPackingsCrystallography}).
However, the classification of more complicated rod packings has not yet been fully explored.  
In this paper, using the perspective of links in the $3$-torus, we begin a classification of the geometry of complements of rod packings.

Various knot invariants have been developed to distinguish knots embedded in the 3-dimensional sphere or Euclidean space. Early knot theorists used tools such as crossing numbers to classify knots, which rely on the fact that knots in $3$-dimensional Euclidean space project onto a $2$-dimensional plane.  However, when the surrounding space is the $3$-torus instead of $\mathbb{R}^3$ or $\SS^3$, it is not always obvious how to apply classical tools. Embedded circles in the $3$-torus may correspond to nontrivial elements in the fundamental group of the $3$-torus.  Even the notion of a link diagram is not straightforward to define in this setting.

Here, we use 3-dimensional geometries to investigate the complements of rods in the $3$-torus. Determining the geometric decomposition does not require a link diagram or projection; this is one reason we introduce these methods to the classification of rod packings. 

Thurston's work on the geometry of Haken manifolds implies that the complement of each link in $\TT^3$ decomposes along essential spheres, discs, tori, and annuli into pieces admitting one of eight 3-dimensional geometric structures, including hyperbolic, sol, and six Seifert fibred structures~\cite{Thurston:3MKleinianGroupsHG}. 

In this paper, we consider the invariant cubic rod packings of O'Keeffe~\emph{et~al} of~\cite{OKeeffeEtAl:CubicRodPackings}, and show directly that five of them are hyperbolic in \refthm{Crystallography}. This gives examples of geometric structure for known rod packings.

We also classify the geometric type of complements of one or two rods in $\TT^3$; these are not hyperbolic. 

\begin{named}{\refthm{MainSingleRod_TwoRods}}
Let $R_1$ and $R_2$ be rod-shaped circles embedded in $\TT^3$.
\begin{itemize}
\item In the case of one rod, the $3$-manifold $\TT^3\setminus R_1$ is Seifert fibred. 

\item If $R_1$ and $R_2$ lift to be parallel to linearly dependent vectors in $\RR^3$, then $\TT^3\setminus(R_1\cup R_2)$ is Seifert fibred.
\item If $R_1$ and $R_2$ lift to linearly independent vectors, then $\TT^3\setminus(R_1\cup R_2)$ is toroidal.
\end{itemize}
In any case, when there are one or two rods, the complement is not hyperbolic. 
\end{named}

It is known that the 3-torus $\TT^3$ is obtained by Dehn filling the Borromean rings in $\SS^3$. The images of the Dehn filling solid tori in $\TT^3$ become three rods, forming the simplest of the cubic rod packings investigated by O'Keeffe~\emph{et~al}. We call these the \emph{standard rods}, and they are parallel to vectors $(1,0,0)$, $(0,1,0)$, and $(0,0,1)$, respectively. Thus it is natural to add to this rod packing a single additional rod, and to consider the resulting geometry. Such rod packings continue to have cubic lattice symmetry in crystallography. In this setting, we prove the following result. 

\begin{named}{\refthm{MainCharacterizeGeometryR}}
Let $a$, $b$, $c$ be integers such that $\mathrm{gcd}(a,b,c)=1$, and let $R$ be a rod parallel to the vector $(a,b,c)$ in $\TT^3$. Let $R_x$, $R_y$, $R_z$ denote the three standard rods in $\TT^3$. 
Then the 3-manifold $\TT^3\setminus(R_x \cup R_y \cup R_z \cup R)$ admits a complete hyperbolic structure if and only if $(a,b,c) \notin \{(\pm 1, 0, 0), (0,\pm 1, 0), (0, 0, \pm 1)\}$. 
\end{named}

Theorems~\ref{Thm:Crystallography}, \ref{Thm:MainSingleRod_TwoRods}, and~\ref{Thm:MainCharacterizeGeometryR} concern 3-periodic links. 
There are other recent results related to $2$-periodic links. This work includes a tabulation of knots in the thickened torus due to Akimova and Matveev~\cite{Akimova-Matveev:VirtualKnots}. The geometry of the complements of alternating 2-periodic links was considered by Champanerkar, Kofman, and Purcell~\cite{CKP:Biperiodic}, and by Adams~\emph{et~al}~\cite{AdamsEtAl:Tilings}.

Other results on $3$-periodic links are closely related to materials science and biological science. Rosi \emph{et al} present the structures of certain metal-organic frameworks (MOFs), which have useful gas and liquid adsorption properties, in the context of the underlying rod packing structures \cite{RosiEtAl:RodPackingsMOF}. Evans, Robins, and Hyde construct a collection of general rod packings using triply periodic minimal surfaces in Euclidean space and the construction method is potentially significant for the design of materials with unusual physical properties \cite{ERH:PeriodicEntanglementII}.   
Rod packing structures in this paper also appear in biological science, in particular, in the structure of skin. 
Norl{\'e}n and Al-Amoudi suggest that the outermost layer of mammalian skin is made up of filaments 
arranged according to a cubic-like rod-packing symmetry~\cite{Norlen-AA:CubicRodAndSkin}.  Evans and Hyde similarly propose that the keratin fibres in the outermost layer of skin form a $3$-dimensional weaving related to the $^+\Sigma$ rod packing discussed below~\cite{EH:SwollenCorneocytes}.  

Others studying geometry and periodic links include Evans, Robins, and Hyde, who arrange 3-periodic links by minimising energy functions~\cite{ERH:IdealGeometry}. Evans and Schr\"oder-Turk use 2-dimensional hyperbolic geometry to form a neighbourhood of triply periodic links embedded in $\RR^3$ or the 3-torus~\cite{ES:HyperbolicBiological}.

\subsection{Organization}  
\refsec{Prelim} provides preliminaries. In \refsec{RodPackingsCrystallography}, we consider five explicit examples arising from \cite{OKeeffeEtAl:CubicRodPackings}. We show that they can be reconsidered as links in the 3-sphere, and use that to prove \refthm{Crystallography}. We prove \refthm{MainSingleRod_TwoRods} in \refsec{OneTwoRods}, focusing on one or two arbitrary rods in the $3$-torus.  Section~\ref{Sec:StdRodsPlusRod} gives an argument for \refthm{MainCharacterizeGeometryR}. 

\subsection{Acknowledgements}
We thank Norman Do for the helpful discussions. Parts of this work were inspired by the Monash honours thesis of Sargon Al-Jeloo~\cite{Al-Jeloo:HonoursThesis}. The research was partially supported by the Australian Research Council, grant DP210103136. We also thank the anonymous referee for their helpful comments, particularly around crystallographic notions.

\section{Preliminaries}\label{Sec:Prelim} 

Throughout this paper, we regard the \emph{$3$-torus} as the unit cube $[0,1]^3$ in $\mathbb{R}^3$ with opposite faces glued in the standard way unless otherwise specified. Let $L$ be a $1$-dimensional or $2$-dimensional submanifold in a $3$-manifold $M$. The notation $N(L)$ denotes an open tubular neighbourhood of $L$ in $M$; $\overline{N}(L)$ denotes a closed tubular neighbourhood.

\begin{definition} \label{Def:Rodsss}
Denote by $\mathcal{P}\from \RR^3  \to \TT^3$
the covering map $\mathcal{P}(x,y,z) \coloneqq ([x-\lfloor x \rfloor], [y - \lfloor y \rfloor], [z - \lfloor z \rfloor])$.  

\vspace{2mm}

\noindent{(1)} A \emph{rod} $R$ in the $3$-torus is the projection $\mathcal{P}(L)$ of a Euclidean straight line $L$ in $\RR^3$. If the set $R=\mathcal{P}(L)$ is homeomorphic to a circle embedded in $\TT^3$, we call $R$ a \emph{rod-shaped circle}, or simply a \emph{rod} when the context is clear. 

\vspace{1mm}

\noindent{(2)} A \emph{lift of a rod} $R$ is a connected component of the pre-image set $\calP^{-1}(R)$ under the projection map $\calP\from \RR^3 \to \TT^3$. 

\vspace{1mm}

\noindent{(3)} Let $a$, $b$, $c$ be real numbers, not all zero. Denote by $L$ a straight line in $\RR^3$ that has the same direction as the vector $(a,b,c)$. We call the rod $R=\calP(L)$ an \emph{$(a,b,c)$-rod}. Unless otherwise specified, each $(a,b,c)$-rod is associated with the map $R_e \from [0,1] \to \TT^3$ defined as $R_e \coloneqq \calP \circ R_{\ell}$, where $R_{\ell} \from [0,1] \to \RR^3$ is the linear map $R_{\ell}(t) = (a_0, b_0, c_0) + t(a,b,c)$ for some $(a_0, b_0, c_0) \in L$.

\vspace{1mm}

\noindent{(4)} We denote by $R_x$, $R_y$, and $R_z$ the $(1,0,0)$-rod, $(0,1,0)$-rod, and $(0,0,1)$-rod respectively. These are shown in \reffig{Homeomorphic}, left. 
  We call $R_x$, $R_y$, and $R_z$ the \emph{standard rods} in $\TT^3$. 
\end{definition} 

\begin{figure}
\includegraphics{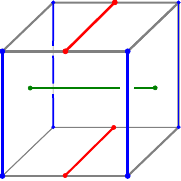} 
\hspace{2.5cm}
\includegraphics{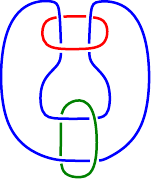} 
\caption{$\TT^3 \setminus (R_x\cup R_y\cup R_z)$ and $\SS^3 \setminus (C_x\cup C_y\cup C_z)$.}
\label{Fig:Homeomorphic}
\end{figure}

Note in \reffig{Homeomorphic} left that we have placed the rods on the unit cube such that they have lifts in $\RR^3$ given by the lines
\begin{equation}\label{Eqn:RodFormulas}
  R_x : (0, \tfrac{1}{2},0)+(1,0,0)t; \;
  R_y : (\tfrac{1}{2}, 0, \tfrac{1}{2}) + (0,1,0)t; \;
  R_z : (0,0,0) + (0,0,1)t
\end{equation} 
The complement of these rods is a 3-manifold well-defined up to ambient isotopy of the rods. However, we will typically illustrate them as shown here.

Further notation: Let $p$, $q$ be integers, not both zero. 
A \emph{$(p,q)$-curve around a rod-shaped circle $R$} in $\TT^3$ is a $(p,q)$-curve in the torus boundary of a neighbourhood of $R$, where $\partial \overline{N}(R)$ is framed such that a meridian, i.e.\ the boundary of an essential disc in $N(R)\subset \TT^3$, is the $(1,0)$-curve. For example, the curve $\mu_y$ in \reffig{OctaDecompT3Mu} (left) is a $(1,0)$-curve around the green rod $R_y$.  

\begin{lemma}  
Let $(a,b,c)\in \ZZ^3\setminus\{(0,0,0)\}$. The map $R_e\from [0,1]\to \TT^3$ associated to the $(a,b,c)$-rod $R$ represents a simple closed curve in $\TT^3$ if and only if $\mathrm{gcd}(a,b,c)=1$. 
\end{lemma}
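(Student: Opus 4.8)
The plan is to reduce the statement to an elementary divisibility condition on the parameter running along the rod. First I would note that, since $(a,b,c)$ has integer entries, $R_e(1) = \calP(R_\ell(1)) = \calP(R_\ell(0)) = R_e(0)$ for \emph{every} choice of $(a,b,c)$, so $R_e$ always descends to a continuous map $\bar R_e \from S^1 \to \TT^3$ from the quotient $[0,1]/(0\sim 1)$. Because $S^1$ is compact and $\TT^3$ is Hausdorff, $\bar R_e$ is an embedding precisely when it is injective. Hence $R_e$ represents a simple closed curve if and only if $R_e$ is injective on $[0,1)$.

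Next I would characterise the failure of injectivity using the defining property of the covering map, namely that $\calP(u) = \calP(v)$ if and only if $u - v \in \ZZ^3$. Two parameters $t_1, t_2 \in [0,1)$ satisfy $R_e(t_1) = R_e(t_2)$ exactly when $(t_1 - t_2)(a,b,c) \in \ZZ^3$. Setting $s = t_1 - t_2$, which ranges over $(-1,1)$ as $t_1,t_2$ range over $[0,1)$, and observing that every such $s$ with $|s|<1$ is realised by a genuine pair $t_1 \neq t_2$ in $[0,1)$, I conclude that $R_e$ is injective on $[0,1)$ if and only if the only $s \in (-1,1)$ with $s\,(a,b,c) \in \ZZ^3$ is $s = 0$. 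Since $(a,b,c) \neq (0,0,0)$, some coordinate, say $a$, is nonzero, and then $sa \in \ZZ$ forces $s$ to be rational; so I may write $s = m/n$ in lowest terms with $n \geq 1$.

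For the forward direction, suppose $\gcd(a,b,c) = 1$. From $s\,(a,b,c) \in \ZZ^3$ and $\gcd(m,n) = 1$ I obtain $n \mid a$, $n \mid b$, and $n \mid c$, whence $n \mid \gcd(a,b,c) = 1$ and $n = 1$. Then $s = m$ is an integer in $(-1,1)$, forcing $s = 0$; thus $R_e$ is injective and the $(a,b,c)$-rod is a simple closed curve. For the reverse direction I argue by contrapositive: if $d = \gcd(a,b,c) > 1$, then $s = 1/d \in (0,1)$ satisfies $s\,(a,b,c) = (a/d, b/d, c/d) \in \ZZ^3$, so $R_e(1/d) = R_e(0)$ with $1/d \neq 0$, and the curve is not simple (indeed it traverses a primitive rod $d$ times).

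The individual steps are short, and I do not expect a serious obstacle. The only point requiring care is the reduction in the second paragraph: verifying that any self-intersection parameter $s$ must be rational with reduced denominator dividing each of $a$, $b$, $c$. Once the covering-map criterion $\calP(u)=\calP(v) \iff u-v \in \ZZ^3$ is in hand, everything else is a routine divisibility computation.
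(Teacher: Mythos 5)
Your proof is correct and takes essentially the same route as the paper: both detect a self-intersection of $R_e$ via the covering criterion $\calP(u)=\calP(v)\iff u-v\in\ZZ^3$ and reduce the lemma to a divisibility statement about $(a,b,c)$. Your reduced-fraction bookkeeping with $s=t_1-t_2=m/n$ is just a cleaner algebraic packaging of the paper's geometric argument, which slides a self-intersecting pair of points back along the direction vector to produce an interior integer point $(m,n,r)$ of the segment with $(a,b,c)=k(m,n,r)$.
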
  

\begin{proof}
Since any line $L$ that has the same direction as the vector $(a,b,c)$ can be translated to run through $(0,0,0)$, we may assume $L$ is a line in $\RR^3$ through $(0,0,0)$ and $(a,b,c)$. Consider the $(a,b,c)$-rod $R$ as $\calP(L)$. Note that $\calP(L)$ is a closed loop because $(a,b,c)\in\ZZ^3$, and indeed the closed segment of $L$ with endpoints $(0,0,0)$ and $(a,b,c)$ projects to a loop in $\TT^3$. 

Suppose $R_e\from [0,1]\to\TT^3$ is a simple loop. Then the interior of the line segment between $(0,0,0)$ and $(a,b,c)$ does not intersect any points with all three coordinates being integers, for such a point would project to the same image as $(0,0,0)$ and $(a,b,c)$ in $\TT^3$. Thus $\mathrm{gcd}(a,b,c) = 1$. 

Suppose $R_e\from [0,1]\to\TT^3$ is not simple. Then there would exist points $p \neq q$ in the interior of the segment of $L$ between $(0,0,0)$ and $(a,b,c)$ such that $\mathcal{P}(p) = \mathcal{P}(q)$. Without loss of generality, assume $|p| < |q|$. Let $t$ be a parameter that increases from zero to $|p|$, and let $\vec{u}$ be the unit vector $(a,b,c)/\lVert(a,b,c)\rVert$. Note that $\mathcal{P}(p-t\vec{u}) = \mathcal{P}(q-t\vec{u})$ for each $t\in [0,|p|]$. So $\mathcal{P}(\vec{0}) = \mathcal{P}(q-|p|\vec{u})$. Since $q-|p|\vec{u}$ is neither $(0,0,0)$ nor $(a,b,c)$ and $\mathcal{P}(q-|p|\vec{u}) = \mathcal{P}(\vec{0})$, we have $(q-|p|\vec{u})$ equals some point $(m,n,r)$ in the interior of the segment of $L$ between $(0,0,0)$ and $(a,b,c)$, with $m,n,r\in\mathbb{Z}$. Therefore, $(a,b,c)=k(m,n,r)$ for some $k\in\mathbb{Z}\setminus\{-1,0,1\}$, and thus $|k|$ would be a common divisor of $a$, $b$, and $c$ that is greater than one. 
\end{proof}

\subsection{Decompositions into ideal octahedra}
In this section, we review a decomposition of the complement of the Borromean rings in $\SS^3$ into ideal octahedra. W.~Thurston may have been the first to observe such a decomposition exists, in~\cite[Chapter~3]{Thurston:GeomTop3Manifolds}.
We recall a slightly different decomposition following the method explained in~\cite[Chapter~7]{Purcell:HyperbolicKnotTheory}, which originally appeared in 
Agol and D.~Thurston's appendix in~\cite{Lackenby:AltVol}.
We also review a decomposition of the complement of the three standard rods in $\TT^3$ into ideal octahedra. This decomposition is known to experts. We give a geometric proof here, which also provides a geometric proof that this manifold is homeomorphic to the Borromean rings complement in $\SS^3$. While the homeomorphism is well known, the geometric proof will be important for the arguments of this paper, and so we include it here.

\begin{lemma}\label{Lem:BorromRingsDecomp}
Let $C_x$, $C_y$, and $C_z$ denote the link components of the Borromean rings in $\SS^3$. The link complement $\SS^3\setminus (C_x\cup C_y\cup C_z)$ can be decomposed into two ideal octahedra. 
\end{lemma}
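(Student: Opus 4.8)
The plan is to read the decomposition directly off a diagram. I would draw $C_x \cup C_y \cup C_z$ in its standard $3$-fold symmetric reduced alternating diagram $D$ on the projection sphere $S^2 \subset \SS^3$, and then apply the ideal polyhedral decomposition of Agol and D.~Thurston as presented in Purcell's book: for a prime, reduced, connected alternating diagram, the sphere $S^2$ splits $\SS^3$ into two balls and yields two ideal polyhedra $P^+$ and $P^-$, one in each ball, glued along their faces to recover the link complement. The crucial feature I would use is that the boundary cell structure of each polyhedron is exactly the diagram itself, viewed as a $4$-valent graph on $S^2$: the crossings of $D$ are the ideal vertices, the arcs of $D$ between consecutive crossings are the edges, and the complementary regions of $D$ are the faces.

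Next I would carry out the combinatorial count. Since $D$ is a $4$-valent graph, its $6$ crossings give $6$ vertices and $12$ edges, and Euler's formula on $S^2$ forces $8$ complementary regions. To identify the polyhedra I would verify, using the symmetric picture, that every region of $D$ is a triangle, i.e.\ is bounded by exactly three crossings: the three ``inner'' crossings bound a central triangular region, the three ``outer'' crossings bound the outermost region, and the remaining six regions sit cyclically between them. A $4$-valent planar graph on six vertices all of whose faces are triangles is the $1$-skeleton of the octahedron (the triangular antiprism), so each of $P^+$ and $P^-$ is combinatorially an ideal octahedron. Gluing $P^+$ to $P^-$ along their eight faces then exhibits $\SS^3 \setminus (C_x \cup C_y \cup C_z)$ as a union of two ideal octahedra.

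The conceptual content is light; the work is in the bookkeeping, and this is where I expect the main obstacle. I would need to check that $D$ is genuinely prime and reduced so that the cited method applies, that the face identifications between $P^+$ and $P^-$ respect the over/under information at each crossing (this is what distinguishes the link complement from other gluings and is the delicate part of the argument), and that no region is a bigon or larger polygon, since either would break the octahedral combinatorics. The $3$-fold symmetry of $D$ is what keeps the edge and face identifications tractable, and I would exploit it throughout to reduce the number of cases to be checked.
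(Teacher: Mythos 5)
Your proposal is correct, but it takes a genuinely different route from the paper. You invoke the classical alternating-diagram decomposition---crossings become ideal vertices, diagram arcs become edges, complementary regions become faces---and then the count $(V,E,F)=(6,12,8)$ together with the observation that every region of the standard $6$-crossing diagram is a triangle identifies each polyhedron as the octahedron ($K_{2,2,2}$, the unique $4$-regular planar triangulation on six vertices). This is essentially W.~Thurston's original derivation, which the paper explicitly sets aside in favour of a ``slightly different decomposition.'' One correction to your sourcing: the crossings-as-ideal-vertices decomposition you use is the Menasco/W.~Thurston one; what Agol and D.~Thurston contribute (the appendix to \cite{Lackenby:AltVol}, presented in \cite[Chapter~7]{Purcell:HyperbolicKnotTheory}) is the \emph{augmented-link} decomposition, which is exactly what the paper performs: it arranges $C_z$ flat in the projection plane with $C_x$ and $C_y$ as crossing circles bounding two-punctured discs, then cuts along the projection plane and the two shaded discs to produce two checkerboard-coloured ideal octahedra with identity gluings on white faces and folding gluings on shaded faces. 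Also, your worry about re-verifying the over/under data in the face identifications is unnecessary for the lemma as stated: that bookkeeping is packaged inside the cited alternating-decomposition machinery, so your argument is complete once the diagram is checked to be prime, reduced, and bigon-free, all of which hold here. What the paper's choice buys is everything that follows: its white faces (projection plane) and shaded faces (two-punctured discs) match face-for-face the decomposition of $\TT^3\setminus(R_x\cup R_y\cup R_z)$ obtained by slicing the unit cube along $\{z=1/2\}$ in \reflem{StdRodsDecomp}, which yields the explicit homeomorphism $h$, and later lets meridians and longitudes be traced through $h$ in \refprop{010101DehnFilling}. Your decomposition proves the lemma perfectly well, but its rotational face pairings would not plug directly into those subsequent arguments.
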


\begin{proof}
The proof is geometric. Arrange $\SS^3\setminus (C_x\cup C_y\cup C_z)$ with $C_z$  in the projection plane, and $C_x$ and $C_y$ orthogonal to the projection plane, bounding disjoint discs meeting the first component twice; see \reffig{Homeomorphic}, right. 

\begin{figure}
\includegraphics{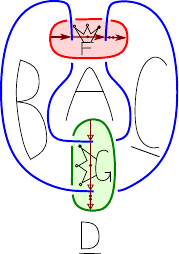}
\quad \quad \quad 
\includegraphics{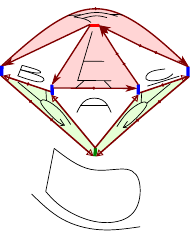}
\quad \quad \quad 
\includegraphics{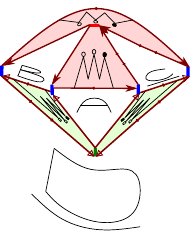}
\caption{Octahedral decomposition of $\SS^3 \setminus (C_x\cup C_y\cup C_z)$. Left: Link diagram with six edges and eight faces. Middle: Octahedron above link diagram (viewed from interior). Right: Octahedron below link diagram (viewed from exterior).  \label{Fig:OctDecompositionS3}}
\end{figure}  

As shown on the left of \reffig{OctDecompositionS3}, we label regions of the projection plane with letters $A$, $B$, $\underline{C}$, and $\underline{D}$. We shade the discs bounded by $C_x$ and $C_y$ (pink and green in the figure, respectively), and label them with letters $F$, $G$, and crowns. 

Cut along the surface of the projection plane, then cut along the two shaded 2-punctured discs. This splits the link complement into two balls with remnants of the link on their boundaries. Each has white faces $A$, $B$, $\underline{C}$, $\underline{D}$, coming from the projection plane, and shaded faces labelled with $F$, $G$ or crowns, coming from the 2-punctured discs. The faces intersect in six ideal edges, coloured brown in the figure. When we shrink the remnants of the link to ideal vertices, we obtain two ideal octahedra, shown in the middle and right of \reffig{OctDecompositionS3}. Note that the faces are checkerboard coloured. 

Gluing the ideal octahedra reverses this cutting procedure. Briefly, two opposite ideal triangles at the top of each octahedron are shaded; these are glued together by folding across the ideal vertex at the top of the octahedron. Similarly, two opposite shaded ideal triangles on the bottom are glued by folding in the same octahedron. The white faces are identified to the corresponding white face in the opposite octahedron by the identity map. 
\end{proof}

\begin{lemma}\label{Lem:StdRodsDecomp}
Let $R_x$, $R_y$, and $R_z$ denote the standard rods in $\TT^3$ as defined in \refdef{Rodsss}(4). The complement $\TT^3\setminus (R_x\cup R_y\cup R_z)$ can be decomposed into two ideal octahedra.

Moreover, the octahedra and its face pairings are identical to those of the Borromean rings, of \reflem{BorromRingsDecomp}. Thus there is a homeomorphism
\[ h \from \TT^3\setminus (R_x\cup R_y\cup R_z) \to \SS^3\setminus (C_x\cup C_y\cup C_z). \]
\end{lemma}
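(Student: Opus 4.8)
The plan is to carry out, directly in the cube model of $\TT^3$, the same cut-and-collapse procedure used to prove \reflem{BorromRingsDecomp}, and then to verify that the two ideal octahedra produced, together with their face pairings, coincide \emph{verbatim} with those of the Borromean rings. Once this is done the homeomorphism $h$ is immediate: two manifolds assembled from the same pair of ideal octahedra by the same face identifications are homeomorphic via the map that is the identity on each octahedron, so no further work is needed for the ``Moreover'' and the existence of $h$.

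First I would fix the rods as in \refeqn{RodFormulas} and select the cutting surfaces playing the roles of the projection plane and the two shaded twice-punctured discs. For the analog of the projection plane I would use the horizontal coordinate tori through the two horizontal rods $R_x$ and $R_y$ (the levels $\{z=0\}$ and $\{z=\tfrac12\}$), with the vertical rod $R_z$ meeting them transversely; for the analog of the shaded discs I would use vertical coordinate surfaces associated to the rods, arranged so that each is punctured by the remaining rods exactly as $D_x,D_y$ are punctured in \reflem{BorromRingsDecomp}. I would then cut $\TT^3\setminus(R_x\cup R_y\cup R_z)$ along this union of surfaces. The expectation, to be verified, is that the cut splits the complement into two balls whose boundaries carry remnants of the rods, with four white faces coming from the horizontal tori, four shaded faces from the vertical surfaces, and six ideal edges along which these faces meet. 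Collapsing the rod remnants to ideal vertices then yields two ideal octahedra, and at this stage I would run the bookkeeping checks against \reflem{BorromRingsDecomp}: two octahedra, eight face-classes (four white, four shaded), six edge-classes, and three cusps.

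The main obstacle is twofold, and both parts live at the level of the cube. The first difficulty is choosing the shaded surfaces correctly: unlike $C_x,C_y\subset\SS^3$, the rods are homologically essential in $\TT^3$ and bound no embedded discs, so the ``twice-punctured discs'' of the $\SS^3$ argument must be replaced by suitable coordinate squares or annuli of the cube, and one must confirm that the full union of cuts separates the complement into balls rather than into pieces with nontrivial topology. The second, and principal, difficulty is the \emph{matching}: I must show that the face identifications induced by the opposite-face gluings of the cube, together with the reversal of the cuts, agree label-for-label with the pairings in \reflem{BorromRingsDecomp} --- the folding of the two shaded triangles across the top ideal vertex and across the bottom ideal vertex within each octahedron, and the identity identification of each white face with its partner in the opposite octahedron. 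Verifying this requires tracking orientations and recording, for each corner of each octahedron, which rod and which ideal vertex it maps to. Once the two octahedral complexes are seen to be literally identical with identical face pairings, the homeomorphism $h$ of the statement follows at once.
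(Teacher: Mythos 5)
Your plan is correct and is essentially the paper's own proof: the paper views $\TT^3$ as the glued unit cube (so the vertical coordinate tori are already cut) and makes the single additional horizontal slice at $\{z=\tfrac12\}$ through $R_y$, producing two rectangular boxes that are manifestly balls, then collapses the rod remnants to ideal vertices and checks, label-for-label, that the white faces glue to the opposite octahedron and the shaded faces fold in pairs within each octahedron, exactly matching \reflem{BorromRingsDecomp}. The only difference is presentational --- your first ``obstacle'' (choosing shaded surfaces and verifying the pieces are balls) dissolves in the cube model, since the shaded faces are just the cube's side faces and the two boxes are obviously balls, leaving only the face-pairing bookkeeping you correctly identify as the main verification.
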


\begin{proof}
View $\TT^3$ as the unit cube with opposite faces identified, and with rods positioned as in \refeqn{RodFormulas}. Slice the cube horizontally along the plane $\{z=1/2\}$; this plane contains the rod $R_y$.

The result is two rectangular boxes, with remnants of the link components at the four vertical edges on the boundary, and across the top and bottom of each box. Observe that shrinking these remnants of the link to ideal vertices yields an ideal octahedron.

Colour white the top and bottom faces of each rectangular box. Label the top faces of the top box $\underline{C}$ and $B$ as shown in \reffig{OctDecompT3}, the middle faces (bottom of the top box and top of the bottom box) $A$ and $\underline{D}$, and the bottom faces of the bottom box $\underline{C}$ and $B$. Then the white faces glue to the correspondingly labelled white face on the opposite octahedron.

\begin{figure}
\includegraphics{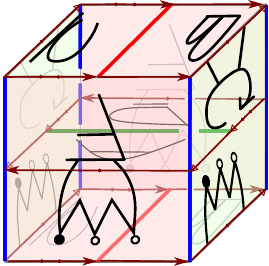}
\hspace{6mm} 
\includegraphics{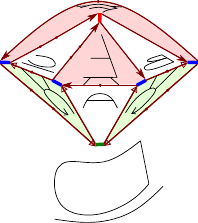}
\hspace{1mm}  
\includegraphics{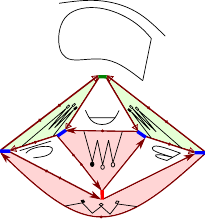}
\caption{Octahedral decomposition of  $\TT^3 \setminus (R_x\cup R_y\cup R_z)$. Left: $\TT^3 \setminus (R_x\cup R_y\cup R_z)$ with six edges and eight faces.  Middle: The upper ideal octahedron (view from exterior).  Right: The lower ideal octahedron (view from exterior)} \label{Fig:OctDecompT3}
\end{figure}  

Shade the front and back of each rectangular box pink, and label the top one $F$ and the bottom with a crown. Shade the left and right of each rectangular box green, and label the top one $G$ and the bottom one with a crown. Note that shaded faces are glued in pairs in the same octahedron. 

With this labeling, the ideal octahedra are checkerboard coloured, and the gluing of faces is identical to that of the Borromean rings complement. Moreover, there is a homeomorphism taking the pair of octahedra in the one link complement to that of the other; to go from  \reffig{OctDecompT3} to \reffig{OctDecompositionS3}, the homeomorphism moves the viewpoint to the interior of the octahedron in the middle, and rotates the octahedron on the right. Since gluings and octahedra agree, this is a homeomorphism of spaces. 
\end{proof}

Note that in the octahedral decompositions, neighbourhoods of the ideal vertices corresponding to rods $R_x$, $R_y$, and $R_z$ are mapped to neighbourhoods of ideal vertices corresponding to the link components $C_x$, $C_y$, and $C_z$, respectively, of the Borromean rings in $\SS^3$.

The following results are almost immediate consequences of \reflem{StdRodsDecomp}.

\begin{theorem} \label{Thm:ThreeStandardRods}  
The complement of the three standard rods in the $3$-torus, denoted by $\TT^3\setminus (R_x \cup R_y \cup R_z)$, admits a complete hyperbolic structure.  
\end{theorem}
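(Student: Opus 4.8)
The plan is to deduce the hyperbolicity of $\TT^3 \setminus (R_x \cup R_y \cup R_z)$ directly from the explicit octahedral decomposition produced in \reflem{StdRodsDecomp}. Since that lemma exhibits a homeomorphism $h \from \TT^3\setminus (R_x\cup R_y\cup R_z) \to \SS^3\setminus (C_x\cup C_y\cup C_z)$ onto the Borromean rings complement, and hyperbolic structures are topological invariants (by Mostow--Prasad rigidity, they are in fact unique), it suffices to show that one of these two homeomorphic manifolds carries a complete hyperbolic structure. The Borromean rings complement is a classical and well-documented example of a hyperbolic link complement, so the cleanest route is simply to invoke this fact through $h$.

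First I would recall that each of the two combinatorial octahedra in the decomposition can be realised geometrically as a \emph{regular ideal hyperbolic octahedron} in $\HH^3$, the octahedron all of whose vertices lie on the sphere at infinity and whose dihedral angles are all equal to $\pi/2$. The key geometric point is that when the face pairings described in \reflem{BorromRingsDecomp} and \reflem{StdRodsDecomp} are performed using isometries of $\HH^3$, the resulting structure is a complete hyperbolic structure on the glued manifold. Concretely, I would verify the two standard conditions of Thurston's gluing: that the edges glue up with total dihedral angle summing to $2\pi$ around each edge class, and that the completeness (cusp) condition holds at each ideal vertex. Because every dihedral angle of the regular ideal octahedron is $\pi/2$, and one checks from the combinatorics that exactly four octahedron-edges are identified around each of the edge classes, every edge sees a total angle of $4 \cdot \pi/2 = 2\pi$. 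This makes the gluing a smooth hyperbolic structure with no cone singularities.

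The second condition I would check is completeness at the cusps. Each ideal vertex has a horospherical cross-section tiled by the Euclidean vertex-triangles of the regular ideal octahedron, which are right isosceles triangles; one assembles these according to the face identifications and verifies that the resulting similarity structure on each cusp cross-section is in fact a Euclidean structure (the holonomy consists of translations, not genuine similarities). This is exactly the statement that the developing map around each cusp has trivial dilation component. Once both conditions hold, the hyperbolic structure given by the regular ideal octahedra is complete, and the manifold is hyperbolic.

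The main obstacle, and the only part requiring genuine care, is the completeness verification at the cusps: one must track how the vertex triangles of the two octahedra are glued together around each of the three cusps (corresponding to $R_x$, $R_y$, $R_z$, and mapping to $C_x$, $C_y$, $C_z$ by the remark following \reflem{StdRodsDecomp}) and confirm the holonomy is translational. Since the combinatorial gluing is identical to that of the Borromean rings, this check is identical to the classical one, and I would either carry it out explicitly from \reffig{OctDecompositionS3} or, more economically, simply cite the fact that the Borromean rings complement is known to decompose into two regular ideal octahedra and is hyperbolic, transporting the structure across the homeomorphism $h$ from \reflem{StdRodsDecomp}. Either way, the complete hyperbolic structure on $\SS^3\setminus(C_x\cup C_y\cup C_z)$ pulls back through $h$ to the desired complete hyperbolic structure on $\TT^3\setminus(R_x\cup R_y\cup R_z)$.
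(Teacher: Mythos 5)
Your proposal is correct and follows essentially the same route as the paper: invoke the homeomorphism $h$ of \reflem{StdRodsDecomp} and transport the well-known complete hyperbolic structure on the Borromean rings complement in $\SS^3$ back to $\TT^3\setminus(R_x\cup R_y\cup R_z)$. Your additional sketch of the direct verification---two regular ideal octahedra with dihedral angles $\pi/2$, four edges per edge class giving angle sum $2\pi$, and translational cusp holonomy---is the correct classical check underlying that cited fact, but the paper simply cites it, as you also offer to do.
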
 

\begin{proof}
By \reflem{StdRodsDecomp}, the $3$-manifold $\TT^3\setminus (R_x \cup R_y \cup R_z)$ is homeomorphic to the complement of the Borromean rings in the $3$-sphere, which is well-known to admit a complete hyperbolic structure; see \cite{Thurston:GeomTop3Manifolds}, or \cite[Section~7.2]{Purcell:HyperbolicKnotTheory}
\end{proof}

\begin{proposition} \label{Prop:010101DehnFilling}
The $3$-torus is homeomorphic to the Dehn filling of the Borromean rings in $\SS^3$ along slopes corresponding to homological longitudes, namely $(0,1)$, $(0,1)$ and $(0,1)$ in the standard framing in $\SS^3$. 
\end{proposition}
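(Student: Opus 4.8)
The plan is to leverage the homeomorphism $h$ of \reflem{StdRodsDecomp} together with a homological computation of Dehn fillings, rather than tracking peripheral curves through the octahedral gluings by hand.

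First I would observe that $\TT^3$ is recovered from the rod complement $\TT^3 \setminus (R_x \cup R_y \cup R_z)$ by Dehn filling each of the three boundary tori along the meridian $\mu_{R_i}$ of the corresponding rod: the solid tori $\overline{N}(R_i)$ are exactly the filling tori, and their meridians are the $(1,0)$-curves in the rod framing by definition. Next, since $h$ restricts to homeomorphisms $\partial N(R_i) \to \partial N(C_i)$ (using the correspondence of ideal-vertex neighbourhoods noted immediately after \reflem{StdRodsDecomp}), it carries each rod meridian $\mu_{R_i}$ to a well-defined primitive slope $s_i$ on $\partial N(C_i)$. Because a homeomorphism of link complements extends over solid tori glued along corresponding slopes, the Borromean rings complement Dehn filled along $(s_x, s_y, s_z)$ is homeomorphic to $\TT^3$.

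It then remains to identify each $s_i$. Write $s_i = p_i \mu_i + q_i \lambda_i$ in the standard Borromean framing, with $\gcd(p_i, q_i) = 1$. The key input is that the pairwise linking numbers of the Borromean rings all vanish, so in $H_1(\SS^3 \setminus (C_x \cup C_y \cup C_z)) \cong \ZZ^3$ the meridians $\mu_x, \mu_y, \mu_z$ form a free basis and each longitude is null-homologous, $[\lambda_i] = 0$. Hence filling along the $s_i$ kills precisely the classes $p_i[\mu_i]$, giving
\[
H_1\bigl(\text{filled manifold}\bigr) \cong \ZZ/p_x \oplus \ZZ/p_y \oplus \ZZ/p_z.
\]
Since the filled manifold is $\TT^3$ and $H_1(\TT^3) \cong \ZZ^3$, each $p_i$ must equal $0$; primitivity then forces $q_i = \pm 1$, so $s_i = \pm\lambda_i$ is the homological longitude, i.e.\ the $(0,1)$-slope. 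As Dehn filling is insensitive to the orientation of the slope, this establishes the claim.

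The step I expect to be the crux is confirming that $h$ transports the rod meridians to genuine primitive slopes on the Borromean boundary tori and that filling along these images reproduces $\TT^3$; once that is in place, the homological computation pins down the slopes with no further geometric bookkeeping. An alternative, more hands-on route would trace the meridian discs of the rods directly through the octahedral gluings of \reflem{StdRodsDecomp} and \reflem{BorromRingsDecomp} to exhibit them as longitudes of the $C_i$, but the homological argument sidesteps that case analysis.
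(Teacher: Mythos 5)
Your proposal is correct, and it takes a genuinely different route from the paper. The paper's proof is the ``hands-on'' alternative you mention at the end: it traces the meridian $\mu_y$ of $R_y$ explicitly through the octahedral decomposition of \reflem{StdRodsDecomp}, observing that $\mu_y$ meets the white faces $A$ and $\underline{D}$ parallel to a green ideal vertex in each octahedron, and that in the Borromean rings complement the corresponding two arcs assemble into a longitude of $C_y$; the other two components are handled by symmetry. Your argument replaces this combinatorial bookkeeping with homology: since the Borromean rings have pairwise linking number zero, $H_1$ of the complement is $\ZZ^3$ freely generated by the meridians with each standard longitude null-homologous, so filling along slopes $s_i = p_i\mu_i + q_i\lambda_i$ yields first homology $\ZZ/p_x \oplus \ZZ/p_y \oplus \ZZ/p_z$, and $H_1(\TT^3)\cong\ZZ^3$ forces $p_i=0$ and hence $s_i=\pm\lambda_i$. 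Your crux step is also sound: the homeomorphism $h$ of the open complements extends (up to isotopy) to the compact manifolds with torus boundary, carrying each rod meridian to a primitive slope, and Dehn filling along corresponding slopes under a homeomorphism produces homeomorphic filled manifolds, so the filled Borromean complement is indeed $\TT^3$. What each approach buys: your homological argument is shorter, less error-prone, and proves something slightly stronger --- \emph{any} Dehn filling of the Borromean rings producing a manifold with $H_1\cong\ZZ^3$ must use the three homological longitudes, independent of the particular homeomorphism $h$. The paper's geometric tracing, by contrast, yields concrete information about where individual peripheral curves sit inside the octahedra, which is in the same spirit as the explicit arc-tracing used later in the proof of \refthm{Crystallography} to draw the images of rod packings in the Borromean rings complement; your approach deliberately discards that curve-level data, which is fine for this proposition but would not substitute for the figure-level analysis elsewhere.
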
 

\begin{proof}
We show that the meridian of a tubular neighbourhood of $R_y$ in $\TT^3$ is mapped to the longitude of a tubular neighbourhood of $C_y$ in $\SS^3$ under the homeomorphism $h$ of \reflem{StdRodsDecomp}. The arguments for $R_x$ and $C_x$ and for $R_z$ and $C_z$ are similar. 

Let $\mu_y$ be a meridian of a tubular neighbourhood of $R_y$ in $\TT^3$; see \reffig{OctaDecompT3Mu} left. In the decomposition of \reflem{StdRodsDecomp}, $\mu_y$ meets white faces labelled $A$ and $\underline{D}$ in both ideal octahedra, and runs parallel to a neighbourhood of the green ideal vertex in both. See \reffig{OctaDecompT3Mu} right.

\begin{figure}
\begingroup%
  \makeatletter%
  \providecommand\color[2][]{%
    \errmessage{(Inkscape) Color is used for the text in Inkscape, but the package 'color.sty' is not loaded}%
    \renewcommand\color[2][]{}%
  }%
  \providecommand\transparent[1]{%
    \errmessage{(Inkscape) Transparency is used (non-zero) for the text in Inkscape, but the package 'transparent.sty' is not loaded}%
    \renewcommand\transparent[1]{}%
  }%
  \providecommand\rotatebox[2]{#2}%
  \newcommand*\fsize{\dimexpr\f@size pt\relax}%
  \newcommand*\lineheight[1]{\fontsize{\fsize}{#1\fsize}\selectfont}%
  \ifx\svgwidth\undefined%
    \setlength{\unitlength}{86.96288109bp}%
    \ifx\svgscale\undefined%
      \relax%
    \else%
      \setlength{\unitlength}{\unitlength * \real{\svgscale}}%
    \fi%
  \else%
    \setlength{\unitlength}{\svgwidth}%
  \fi%
  \global\let\svgwidth\undefined%
  \global\let\svgscale\undefined%
  \makeatother%
  \begin{picture}(1,0.98915377)%
    \lineheight{1}%
    \setlength\tabcolsep{0pt}%
    \put(0,0){\includegraphics[width=\unitlength,page=1]{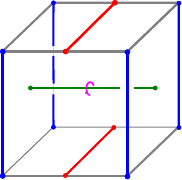}}%
    \put(0.47151894,0.58746338){\color[rgb]{1,0,1}\makebox(0,0)[lt]{\lineheight{1.25}\smash{\begin{tabular}[t]{l}$\mu_y$\end{tabular}}}}%
    \put(0,0){\includegraphics[width=\unitlength,page=2]{T3WithoutRxRyRz6edgesRotatedMu.pdf}}%
  \end{picture}%
\endgroup%

\hspace{6mm} 
\begingroup%
  \makeatletter%
  \providecommand\color[2][]{%
    \errmessage{(Inkscape) Color is used for the text in Inkscape, but the package 'color.sty' is not loaded}%
    \renewcommand\color[2][]{}%
  }%
  \providecommand\transparent[1]{%
    \errmessage{(Inkscape) Transparency is used (non-zero) for the text in Inkscape, but the package 'transparent.sty' is not loaded}%
    \renewcommand\transparent[1]{}%
  }%
  \providecommand\rotatebox[2]{#2}%
  \newcommand*\fsize{\dimexpr\f@size pt\relax}%
  \newcommand*\lineheight[1]{\fontsize{\fsize}{#1\fsize}\selectfont}%
  \ifx\svgwidth\undefined%
    \setlength{\unitlength}{95.0300501bp}%
    \ifx\svgscale\undefined%
      \relax%
    \else%
      \setlength{\unitlength}{\unitlength * \real{\svgscale}}%
    \fi%
  \else%
    \setlength{\unitlength}{\svgwidth}%
  \fi%
  \global\let\svgwidth\undefined%
  \global\let\svgscale\undefined%
  \makeatother%
  \begin{picture}(1,1.12514458)%
    \lineheight{1}%
    \setlength\tabcolsep{0pt}%
    \put(0,0){\includegraphics[width=\unitlength,page=1]{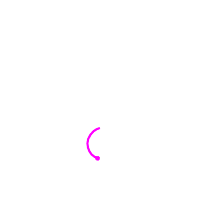}}%
    \put(0.00234968,0.35582909){\color[rgb]{1,0,1}\makebox(0,0)[lt]{\lineheight{1.25}\smash{\begin{tabular}[t]{l}$\textrm{1st half}$\end{tabular}}}}%
    \put(0.00207938,0.24488247){\color[rgb]{1,0,1}\makebox(0,0)[lt]{\lineheight{1.25}\smash{\begin{tabular}[t]{l}$\textrm{of } \mu_y$\end{tabular}}}}%
    \put(0,0){\includegraphics[width=\unitlength,page=2]{BorromeanRingsFrontOctahedronExtMu.pdf}}%
  \end{picture}%
\endgroup%

\hspace{3mm} 
\begingroup%
  \makeatletter%
  \providecommand\color[2][]{%
    \errmessage{(Inkscape) Color is used for the text in Inkscape, but the package 'color.sty' is not loaded}%
    \renewcommand\color[2][]{}%
  }%
  \providecommand\transparent[1]{%
    \errmessage{(Inkscape) Transparency is used (non-zero) for the text in Inkscape, but the package 'transparent.sty' is not loaded}%
    \renewcommand\transparent[1]{}%
  }%
  \providecommand\rotatebox[2]{#2}%
  \newcommand*\fsize{\dimexpr\f@size pt\relax}%
  \newcommand*\lineheight[1]{\fontsize{\fsize}{#1\fsize}\selectfont}%
  \ifx\svgwidth\undefined%
    \setlength{\unitlength}{98.40814203bp}%
    \ifx\svgscale\undefined%
      \relax%
    \else%
      \setlength{\unitlength}{\unitlength * \real{\svgscale}}%
    \fi%
  \else%
    \setlength{\unitlength}{\svgwidth}%
  \fi%
  \global\let\svgwidth\undefined%
  \global\let\svgscale\undefined%
  \makeatother%
  \begin{picture}(1,1.06508395)%
    \lineheight{1}%
    \setlength\tabcolsep{0pt}%
    \put(0,0){\includegraphics[width=\unitlength,page=1]{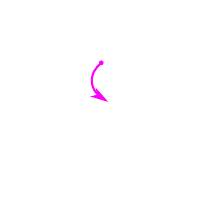}}%
    \put(0.02457111,0.68599309){\color[rgb]{1,0,1}\makebox(0,0)[lt]{\lineheight{1.25}\smash{\begin{tabular}[t]{l}$\textrm{2nd half}$\end{tabular}}}}%
    \put(0.02431009,0.57885497){\color[rgb]{1,0,1}\makebox(0,0)[lt]{\lineheight{1.25}\smash{\begin{tabular}[t]{l}$\textrm{of } \mu_y$\end{tabular}}}}%
    \put(0,0){\includegraphics[width=\unitlength,page=2]{BorromeanRingsBackOctahedronRotate1Mu.pdf}}%
  \end{picture}%
\endgroup%

\caption{Left: $\TT^3 \setminus (R_x\cup R_y\cup R_z)$ and $\mu_y$.  Middle: Upper octahedron and half of $\mu_y$ (view from exterior).  Right: Lower octahedron and second half of $\mu_y$ (view from exterior)
\label{Fig:OctaDecompT3Mu}}
\end{figure}  

In the complement of the Borromean rings, a curve running from $A$ to $\underline{D}$ parallel to the green ideal vertex in an octahedron is half of a longitude; see \reffig{OctaDecompS3Mu} right. The two arcs of $h(\mu_y)$ in the two octahedra therefore form a longitude of $C_y$. 
\begin{figure}
\begingroup%
  \makeatletter%
  \providecommand\color[2][]{%
    \errmessage{(Inkscape) Color is used for the text in Inkscape, but the package 'color.sty' is not loaded}%
    \renewcommand\color[2][]{}%
  }%
  \providecommand\transparent[1]{%
    \errmessage{(Inkscape) Transparency is used (non-zero) for the text in Inkscape, but the package 'transparent.sty' is not loaded}%
    \renewcommand\transparent[1]{}%
  }%
  \providecommand\rotatebox[2]{#2}%
  \newcommand*\fsize{\dimexpr\f@size pt\relax}%
  \newcommand*\lineheight[1]{\fontsize{\fsize}{#1\fsize}\selectfont}%
  \ifx\svgwidth\undefined%
    \setlength{\unitlength}{95.64486058bp}%
    \ifx\svgscale\undefined%
      \relax%
    \else%
      \setlength{\unitlength}{\unitlength * \real{\svgscale}}%
    \fi%
  \else%
    \setlength{\unitlength}{\svgwidth}%
  \fi%
  \global\let\svgwidth\undefined%
  \global\let\svgscale\undefined%
  \makeatother%
  \begin{picture}(1,1.15585071)%
    \lineheight{1}%
    \setlength\tabcolsep{0pt}%
    \put(0,0){\includegraphics[width=\unitlength,page=1]{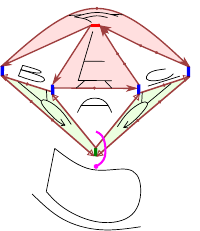}}%
    \put(0.6369865,0.42345655){\color[rgb]{1,0,1}\makebox(0,0)[lt]{\lineheight{1.25}\smash{\begin{tabular}[t]{l}$\textrm{1st half}$\end{tabular}}}}%
    \put(0.63671788,0.3132231){\color[rgb]{1,0,1}\makebox(0,0)[lt]{\lineheight{1.25}\smash{\begin{tabular}[t]{l}$\textrm{of } \mu_y$\end{tabular}}}}%
  \end{picture}%
\endgroup%

\quad \quad 
\begingroup%
  \makeatletter%
  \providecommand\color[2][]{%
    \errmessage{(Inkscape) Color is used for the text in Inkscape, but the package 'color.sty' is not loaded}%
    \renewcommand\color[2][]{}%
  }%
  \providecommand\transparent[1]{%
    \errmessage{(Inkscape) Transparency is used (non-zero) for the text in Inkscape, but the package 'transparent.sty' is not loaded}%
    \renewcommand\transparent[1]{}%
  }%
  \providecommand\rotatebox[2]{#2}%
  \newcommand*\fsize{\dimexpr\f@size pt\relax}%
  \newcommand*\lineheight[1]{\fontsize{\fsize}{#1\fsize}\selectfont}%
  \ifx\svgwidth\undefined%
    \setlength{\unitlength}{96.01021467bp}%
    \ifx\svgscale\undefined%
      \relax%
    \else%
      \setlength{\unitlength}{\unitlength * \real{\svgscale}}%
    \fi%
  \else%
    \setlength{\unitlength}{\svgwidth}%
  \fi%
  \global\let\svgwidth\undefined%
  \global\let\svgscale\undefined%
  \makeatother%
  \begin{picture}(1,1.15145229)%
    \lineheight{1}%
    \setlength\tabcolsep{0pt}%
    \put(0,0){\includegraphics[width=\unitlength,page=1]{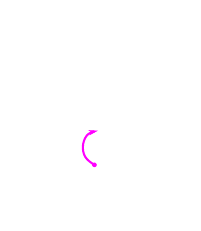}}%
    \put(0.6383679,0.42274594){\color[rgb]{1,0,1}\makebox(0,0)[lt]{\lineheight{1.25}\smash{\begin{tabular}[t]{l}$\textrm{2nd half}$\end{tabular}}}}%
    \put(0.6381003,0.31293196){\color[rgb]{1,0,1}\makebox(0,0)[lt]{\lineheight{1.25}\smash{\begin{tabular}[t]{l}$\textrm{of } \mu_y$\end{tabular}}}}%
    \put(0,0){\includegraphics[width=\unitlength,page=2]{BorromeanRingsBackOctahedronMu.pdf}}%
  \end{picture}%
\endgroup%

\quad \quad 
\begingroup%
  \makeatletter%
  \providecommand\color[2][]{%
    \errmessage{(Inkscape) Color is used for the text in Inkscape, but the package 'color.sty' is not loaded}%
    \renewcommand\color[2][]{}%
  }%
  \providecommand\transparent[1]{%
    \errmessage{(Inkscape) Transparency is used (non-zero) for the text in Inkscape, but the package 'transparent.sty' is not loaded}%
    \renewcommand\transparent[1]{}%
  }%
  \providecommand\rotatebox[2]{#2}%
  \newcommand*\fsize{\dimexpr\f@size pt\relax}%
  \newcommand*\lineheight[1]{\fontsize{\fsize}{#1\fsize}\selectfont}%
  \ifx\svgwidth\undefined%
    \setlength{\unitlength}{85.56467473bp}%
    \ifx\svgscale\undefined%
      \relax%
    \else%
      \setlength{\unitlength}{\unitlength * \real{\svgscale}}%
    \fi%
  \else%
    \setlength{\unitlength}{\svgwidth}%
  \fi%
  \global\let\svgwidth\undefined%
  \global\let\svgscale\undefined%
  \makeatother%
  \begin{picture}(1,1.4245337)%
    \lineheight{1}%
    \setlength\tabcolsep{0pt}%
    \put(0,0){\includegraphics[width=\unitlength,page=1]{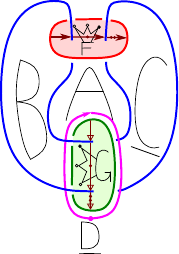}}%
    \put(0.67734486,0.22342607){\color[rgb]{1,0,1}\makebox(0,0)[lt]{\lineheight{1.25}\smash{\begin{tabular}[t]{l}$h(\mu_y)$\end{tabular}}}}%
  \end{picture}%
\endgroup%

\caption{Identifying $h(\mu_y)$ in $\SS^3 \setminus (C_x\cup C_y\cup C_z)$. Left: Octahedron above link diagram and half of $\mu_y$ (view from interior).  Middle: Octahedron below link diagram and half of $\mu_y$ (from exterior).  Right: $\SS^3 \setminus (C_x\cup C_y\cup C_z)$ and $h(\mu_y)$}
\label{Fig:OctaDecompS3Mu}
\end{figure}  
\end{proof}

\begin{lemma} \label{Lem:LinkingNum} 
If a rod-shaped circle $R$, embedded in $\TT^3\setminus(R_x \cup R_y \cup R_z)$, is parallel to the vector $(a,b,c) \in \mathbb{Z}\times\mathbb{Z}\times\mathbb{Z}\setminus \{(0,0,0)\}$ and $\mathrm{gcd}(a,b,c) = 1$, then $|\mathrm{Lk}(h(R),C_x)| = |a|$, $|\mathrm{Lk}(h(R),C_y)| = |b|$, and $|\mathrm{Lk}(h(R),C_z)| = |c|$. 
\end{lemma}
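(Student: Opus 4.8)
The plan is to reduce each linking number to a homology calculation, transport it across the homeomorphism $h$ of \reflem{StdRodsDecomp}, and anchor the cusp bookkeeping with \refprop{010101DehnFilling}.

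First I would recall the standard fact that for any link, $H_1$ of the complement is free abelian on the meridians: here $H_1(\SS^3\setminus(C_x\cup C_y\cup C_z))\cong\ZZ^3$ with basis the meridians $m_x,m_y,m_z$, and any curve $K$ in the complement satisfies $[K]=\Lk(K,C_x)\,m_x+\Lk(K,C_y)\,m_y+\Lk(K,C_z)\,m_z$. Thus $|\Lk(h(R),C_x)|$, $|\Lk(h(R),C_y)|$, $|\Lk(h(R),C_z)|$ are precisely the absolute values of the coordinates of $h_*[R]$ in the meridian basis, and the lemma is equivalent to the single claim $h_*[R]=\pm a\,m_x\pm b\,m_y\pm c\,m_z$.

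Next I would pin down the domain. Since any loop in $\TT^3$ can be pushed off the rods, the inclusion $\TT^3\setminus(R_x\cup R_y\cup R_z)\hookrightarrow\TT^3$ induces a surjection onto $H_1(\TT^3)\cong\ZZ^3$; as the domain is itself isomorphic to $\ZZ^3$ (being homeomorphic to the Borromean complement by \reflem{StdRodsDecomp}) and a surjective endomorphism of $\ZZ^3$ is an isomorphism, this inclusion is an isomorphism on $H_1$. Letting $\ell_x,\ell_y,\ell_z$ be core-parallel longitudes on $\partial\overline{N}(R_x)$, $\partial\overline{N}(R_y)$, $\partial\overline{N}(R_z)$, these map to the standard basis $e_1,e_2,e_3$ of $H_1(\TT^3)$ and hence form a basis of $H_1(\TT^3\setminus(R_x\cup R_y\cup R_z))$. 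Because $R$ is parallel to $(a,b,c)$, its class maps to $(a,b,c)$ in $H_1(\TT^3)$, so $[R]=a\,[\ell_x]+b\,[\ell_y]+c\,[\ell_z]$.

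Finally I would compute $h_*$ on the longitudes. On the cusp torus of $R_x$, the homeomorphism $h$ carries the basis $\{\mu_x,\ell_x\}$ to a basis of $H_1(\partial\overline{N}(C_x))=\langle m_x,\lambda_x^S\rangle$, and by \refprop{010101DehnFilling} it sends $\mu_x\mapsto\lambda_x^S$; the determinant-$\pm1$ constraint then forces $h_*[\ell_x]=\epsilon_x m_x+k_x\lambda_x^S$ with $\epsilon_x=\pm1$. Since the Borromean rings are pairwise unlinked, each Seifert longitude $\lambda_x^S$ is null-homologous in the full complement, so $h_*[\ell_x]=\pm m_x$ in homology, and similarly for $y$ and $z$. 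Combining gives $h_*[R]=\pm a\,m_x\pm b\,m_y\pm c\,m_z$, which yields the claimed equalities of absolute values. The main obstacle is exactly this last step: correctly reading off $h_*$ on $H_1$ requires using the cusp correspondence of \refprop{010101DehnFilling} together with the homological triviality of the longitudes; the rest is routine. Geometrically the same count is transparent, since $|\Lk(h(R),C_x)|$ equals the algebraic intersection of $R$ with a coordinate torus $\{x=\mathrm{const}\}$ in $\TT^3$, which is $|a|$.
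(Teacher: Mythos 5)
Your proof is correct, but it takes a genuinely different route from the paper's. The paper argues by direct geometric count: it takes explicit Seifert discs $D_x$, $D_y$, $D_z$ for the Borromean components (visible as the shaded and white faces in the octahedral decomposition), pulls them back under $h^{-1}$ to unit translates of the coordinate planes of the cube, and observes that a straight line parallel to $(a,b,c)$ crosses the translates of the $yz$-plane exactly $|a|$ times, always from the same side to the other, giving $|\Lk(h(R),C_x)|=|a|$, and similarly for $|b|$ and $|c|$ --- your closing sentence about the algebraic intersection with a torus $\{x=\mathrm{const}\}$ is exactly this argument compressed into one line. You instead do pure $H_1$ bookkeeping: the meridian basis for the homology of a link complement in $\SS^3$, the inclusion-induced map $H_1(\TT^3\setminus(R_x\cup R_y\cup R_z))\to H_1(\TT^3)$ being an isomorphism (surjectivity plus the Hopfian property of $\ZZ^3$), and the peripheral identification $h_*\mu_x=\lambda_x^S$ from \refprop{010101DehnFilling} combined with the determinant-$\pm 1$ constraint and the null-homology of the Seifert longitudes (which holds because the Borromean components are pairwise unlinked). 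Each step checks out; note that your argument quietly uses two facts worth making explicit: that $h$ carries $\partial\overline{N}(R_i)$ to $\partial\overline{N}(C_i)$ (recorded in the paper just after \reflem{StdRodsDecomp}), and that $[\ell_x]$ is independent of the choice of longitude framing, which holds because the meridian $\mu_x$ is null-homologous in the rod complement (its image in $H_1(\TT^3)$ vanishes and the inclusion is injective on $H_1$). As for trade-offs: your homological route is more robust and would apply verbatim to any homeomorphism with known cusp behaviour, with no need to track surfaces through $h$; the paper's direct count is shorter given the octahedral pictures already in hand, and it additionally establishes that all intersections of $h(R)$ with each disc $D_i$ occur with the same sign --- a geometric fact the paper reuses later (Case 3 of \reflem{Vnot0}) that your homological computation does not by itself recover.
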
 

\begin{proof}
Let $D_x$, $D_y$, and $D_z$ be discs bounded by the link components $C_x$, $C_y$, and $C_z$ respectively. We view these as the shaded pink disc labelled with $F$ and a crown, the shaded green disc labelled with $G$ and a crown, and the white disc labelled with $B$ and $\underline{C}$, respectively, in \reffig{OctDecompositionS3}. 
Then $D_x$, $D_y$, and $D_z$ are Seifert surfaces for the link components $C_x$, $C_y$, and $C_z$ respectively. 

The homeomorphism $h^{-1}$ of \reflem{StdRodsDecomp} takes the shaded pink discs to the front and back faces of the cube in \reffig{OctDecompT3}; this is the $yz$-plane and its translate by $(1,0,0)$. The $(a,b,c)$-rod will intersect unit translates of the $yz$-plane in $\RR^3$ exactly $|a|$ times. All such intersections pass from the same side of the Seifert surface to the other. Therefore, $|\Lk(h(R),C_x)| = |a|$. 

Similarly, the $(a,b,c)$-rod meets the left and right faces of the unit cube exactly $|b|$ times; these are unit translates of the $xz$-plane in $\RR^3$, and the face shaded green in \reffig{OctDecompT3}, so $|\Lk(h(R),C_y)| = |b|$. And the $(a,b,c)$-rod meets top and bottom faces, labelled $B$ and $\underline{C}$ a total of $|c|$ times, so $|\Lk(h(R),C_z)| = |c|$. 
\end{proof} 

\section{Geometry of invariant cubic rod packings in crystallography} \label{Sec:RodPackingsCrystallography}

In the crystallography paper \cite{OKeeffeEtAl:CubicRodPackings}, O'Keeffe \emph{et al} find invariant cubic rod packings with axes along $(1,0,0)$ (there are two of them, denoted $^+\Pi$ and $\Pi^*$), axes along $(1,1,0)$ (there are none), and axes along $(1,1,1)$ (there are four, denoted $\Gamma$, $^+\Omega$, $^+\Sigma$ and $\Sigma^*$).  Here, a \emph{cubic} rod packing refers to a rod packing with crystallographic symmetry equal to a cubic space group. That is, it has the symmetry of a Euclidean cube, with right angles and equal side lengths. The term \emph{invariant} describes a cubic rod packing in which the axes of the rods are in directions determined by cubic symmetry, and there are no free parameters in the geometric locations of the rods (aside from scaling side lengths). 

In this section, we view invariant cubic rod packings as link complements in the 3-torus $\TT^3$ with cubic fundamental domain. This gives a $6$-component link for the $^+\Pi$ structure, a $3$-component link for the $\Pi^*$ structure and $4$-component links for the $\Gamma$, $^+\Omega$, $^+\Sigma$ structures.  We use the mapping between the complement of the Borromean rings in $\SS^3$ and the complement of the standard rods in $\TT^3$ to find hyperbolic structures on five of the six rod packings. The rod packings we consider are shown in \reffig{OKeeffePackings}. The one we omit, $\Sigma^*$, has eight components, and is the union of $^+\Sigma$ and its mirror image. 

\begin{figure}
\captionsetup[subfigure]{position=b}
\centering
\subcaptionbox{$^+\Pi$  \\
$\textcolor{blue}{R_1}: (0,0,0)+(0,0,1)t$ \\
$\textcolor{MyCyan}{R_2}: (\frac{5}{8},\frac{5}{8},0)+(0,0,1)t$ \\ 
$\textcolor{red}{R_3}: (0,\frac{1}{2},0)+(1,0,0)t$ \\
$\textcolor{magenta}{R_4}: (0,\frac{7}{8},\frac{1}{4})+(1,0,0)t$ \\
$\textcolor{ForestGreen}{R_5}: (\frac{1}{2},0,\frac{1}{2})+(0,1,0)t$ \\
$\textcolor{olive}{R_6}: (\frac{3}{4},0,\frac{1}{8})+(0,1,0)t$ \\
Volume $\approx 21.33$
\label{Fig:PlusPi} }
{\includegraphics{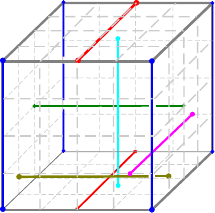}}
\quad \quad 
\subcaptionbox{$\Pi^*$ \\
$\textcolor{blue}{R_1}: (0,0,0)+(0,0,1)t$ \\
$\textcolor{red}{R_2}: (0,\frac{1}{2},0)+(1,0,0)t$ \\
$\textcolor{ForestGreen}{R_3}: (\frac{1}{2},0,\frac{1}{2})+(0,1,0)t$ 
Volume $\approx 7.33$ 
\label{Fig:PiStar}}
{\includegraphics{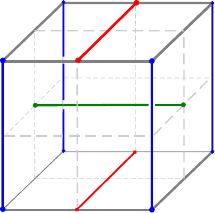}} \\ \vspace{4mm}
\subcaptionbox{$\Gamma$  \\
$\textcolor{MyCyan}{R_1}: (\frac{1}{8},0,\frac{1}{4})+(1,1,1)t$ \\ 
$\textcolor{magenta}{R_2}: (\frac{3}{8},\frac{3}{4},0)+(1,\text{-}1,1)t$ \\ 
$\textcolor{olive}{R_3}: (\frac{7}{8},\frac{1}{4},0)+(\text{-}1,\text{-}1,1)t$  \\
$\textcolor{orange}{R_4}: (\frac{3}{8},\frac{1}{4},0)+(\text{-}1,1,1)t$ 
Volume $\approx 24.36$ 
\label{Fig:Gamma} }
{\includegraphics{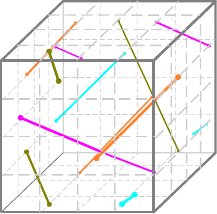}} 
\hspace{5mm} 
\subcaptionbox{$^+\Omega$ \\
$\textcolor{MyCyan}{R_1}: (\frac{1}{3},\frac{2}{3},0)+(1,1,1)t$ \\ 
$\textcolor{magenta}{R_2}: (\frac{2}{3},\frac{2}{3},0)+(1,\text{-}1,1)t$\\
$\textcolor{olive}{R_3}: (\frac{2}{3},\frac{1}{3},0)+(\text{-}1,\text{-}1,1)t$  \\
$\textcolor{orange}{R_4}: (\frac{1}{3},\frac{1}{3},0)+(\text{-}1,1,1)t$ 
Volume $\approx 24.09$ 
\label{Fig:PlusOmega} }
{\includegraphics{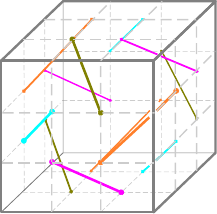}}
\hspace{5mm} 
\subcaptionbox{$^+\Sigma$\\
$\textcolor{MyCyan}{R_1}: (\frac{1}{3},\frac{2}{3},0)+(1,1,1)t$ \\ 
$\textcolor{magenta}{R_2}: (\frac{1}{6},\frac{2}{3},0)+(1,\text{-}1,1)t$\\
$\textcolor{olive}{R_3}: (\frac{2}{3},\frac{5}{6},0)+(\text{-}1,\text{-}1,1)t$  \\
$\textcolor{orange}{R_4}: (\frac{5}{6},\frac{5}{6},0)+(\text{-}1,1,1)t$ 
Volume $\approx 27.50$ 
\label{Fig:PlusSigma} }
{\includegraphics{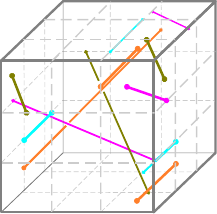}} 
\caption{Rod packings in \cite{OKeeffeEtAl:CubicRodPackings} as links in $\TT^3$. Parametric equations describe lifts of rods embedded in the $3$-torus. Hyperbolic volumes are also given, obtained from SnapPy \cite{SnapPy}. \label{Fig:OKeeffePackings}}
\end{figure}

\begin{theorem}\label{Thm:Crystallography}
Let $\mathcal{R}$ be any of the invariant cubic rod packings shown in \reffig{OKeeffePackings}. Then $\TT^3\setminus\mathcal{R}$ admits a complete hyperbolic structure. Moreover, the hyperbolic structures are all distinct. 
\end{theorem}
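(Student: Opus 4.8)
The plan is to transport each complement $\TT^3\setminus\mathcal R$ to a link complement in $\SS^3$ by the homeomorphism $h$ of \reflem{StdRodsDecomp}, and then verify hyperbolicity there using the same octahedral bookkeeping as in \reflem{BorromRingsDecomp} and \reflem{StdRodsDecomp}. The packing $\Pi^*$ is \emph{exactly} the three standard rods: comparing its coordinates in \reffig{OKeeffePackings} with \refeqn{RodFormulas} shows $R_1=R_z$, $R_2=R_x$, $R_3=R_y$, so $\TT^3\setminus\Pi^*$ is hyperbolic immediately by \refthm{ThreeStandardRods}. For $^+\Pi$ the rods $R_1,R_3,R_5$ are again $R_z,R_x,R_y$, so $\TT^3\setminus{}^+\Pi=(\TT^3\setminus(R_x\cup R_y\cup R_z))\setminus(R_2\cup R_4\cup R_6)$, and applying $h$ exhibits it as the Borromean rings together with three extra components $h(R_2),h(R_4),h(R_6)$. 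Since each extra rod is parallel to a coordinate axis, \reflem{LinkingNum} forces its image to have linking number $\pm 1$ with exactly one of $C_x,C_y,C_z$ and $0$ with the others; together with the explicit octahedral picture this identifies the resulting $6$-component link in $\SS^3$, and no Dehn filling is required.

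For $\Gamma$, $^+\Omega$, and $^+\Sigma$, whose rods run along the body diagonals $(\pm1,\pm1,1)$, the packing does \emph{not} contain the standard rods, so $h$ cannot be applied directly. I would first adjoin $R_x\cup R_y\cup R_z$, arranging them disjoint from $\mathcal R$ after a small isotopy, and transport $\TT^3\setminus(\mathcal R\cup R_x\cup R_y\cup R_z)$ by $h$. By \reflem{LinkingNum} each of the four new components links every one of $C_x,C_y,C_z$ exactly once, which pins down the augmented Borromean link in $\SS^3$. By \refprop{010101DehnFilling}, the desired complement $\TT^3\setminus\mathcal R$ is then recovered from this augmented link complement by Dehn filling the three cusps $C_x,C_y,C_z$ along their homological longitudes $(0,1)$.

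To prove that each link complement produced in $\SS^3$ is hyperbolic, I would extend the construction of \reflem{StdRodsDecomp}: subdivide the two ideal octahedra along the discs and planes swept out by the additional rods to obtain an ideal polyhedral decomposition, assign the regular ideal octahedron (all dihedral angles $\pi/2$), or its subdivisions, to the cells, and verify that this is the complete hyperbolic structure by checking that the dihedral angles around every edge sum to $2\pi$ and that each cusp cross-section inherits a Euclidean translation structure. Mostow rigidity then yields uniqueness. For $^+\Pi$ this already finishes the argument; for the three diagonal packings it remains to promote hyperbolicity of the augmented link complement to that of the filled manifold $\TT^3\setminus\mathcal R$.

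I expect this last filling step to be the main obstacle, since Dehn filling need not preserve hyperbolicity for arbitrary slopes. I would handle it in one of two ways: either invoke Thurston's hyperbolic Dehn surgery theorem together with the $6$-theorem of Agol and Lackenby, after computing the cusp shapes of $C_x,C_y,C_z$ and checking that the longitudinal filling slopes have length greater than $6$; or bypass filling entirely by building an explicit ideal octahedral decomposition of $\TT^3\setminus\mathcal R$ \emph{directly}, slicing the unit cube transversely to the diagonal rods in analogy with the plane $\{z=\tfrac12\}$ used in \reflem{StdRodsDecomp}, and verifying the gluing and completeness equations as above. The delicate points are producing these explicit decompositions for the diagonal packings, confirming that every edge has the valence forcing angle sum $2\pi$, and ensuring no cusp cross-section fails to be Euclidean; equivalently, in the surface-theoretic formulation, ruling out essential spheres, discs, annuli, and tori so that geometrisation delivers the hyperbolic piece.
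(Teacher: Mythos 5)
Your reduction to links in $\SS^3$ is the same as the paper's: $\Pi^*$ is exactly the standard rods (hyperbolic by \refthm{ThreeStandardRods}), and the remaining packings are transported by the homeomorphism $h$ of \reflem{StdRodsDecomp} to links alongside the Borromean rings, with $((0,1),(0,1),(0,1))$-filling via \refprop{010101DehnFilling} recovering $\TT^3\setminus\mathcal{R}$ for the diagonal packings (and no filling needed for ${}^+\Pi$, which already contains $R_x\cup R_y\cup R_z$ --- also as in the paper). However, there are two genuine gaps. First, linking numbers with $C_x$, $C_y$, $C_z$ do not ``pin down'' the images $h(R_i)$: a curve in the complement is not determined up to isotopy by its linking numbers with the other components, so your identification of the augmented links for $\Gamma$, ${}^+\Omega$, ${}^+\Sigma$ is unjustified as stated. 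The paper instead traces each rod segment through the faces of the two ideal octahedra and observes that each arc cobounds a \emph{linear} disc with the octahedral faces, which is what allows the arcs to be reproduced faithfully (parallel to the faces, with correct intersection points) inside the Borromean rings complement; this tracing step is needed for the diagonal packings just as much as for ${}^+\Pi$.

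Second, and more seriously, your hyperbolicity verification is a plan rather than a proof, and its central steps are not carried out and would not obviously succeed. Drilling the extra rod components changes the manifold, so there is no reason the complement decomposes into regular ideal octahedra or subdivisions thereof with dihedral angle sums $2\pi$; you produce neither the polyhedral shapes nor solutions to any gluing and completeness equations, and for the diagonal packings you explicitly defer the filling step. Your fallback via the $6$-theorem requires the cusp shapes and slope lengths of the augmented link complement, i.e.\ its hyperbolic structure, which you also have not established. The paper sidesteps all of this: it inputs each explicit link into SnapPy, performs the $(0,1)$-fillings on the Borromean cusps where needed, and certifies hyperbolicity rigorously via verified computation (a decomposition into positively oriented ideal tetrahedra checked with the SageMath verification based on hikmot). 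Unless you reproduce such a verified computation, or genuinely construct explicit geometric decompositions for each of the five links (which would be substantial new work), your argument establishes hyperbolicity only for $\Pi^*$.
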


\begin{proof}
We study the cubic rod packings one at a time. Note that the $\Pi^*$ rod packing corresponds to the link complement $\TT^3\setminus (R_x \cup R_y \cup R_z)$, and so it is hyperbolic by \refthm{ThreeStandardRods}, built of regular ideal octahedra. 

For each of the other rod packings, we use \reflem{StdRodsDecomp} to turn the rod packing into a link diagram in $S^3$, then use tools to identify geometry available in this setting. In particular, the software SnapPy~\cite{SnapPy}, with the SageMath verification of geometry based on~\cite{hikmot}, verifies hyperbolicity in each case. 

We will work through $^+\Omega$ first. The octahedra split each rod $R_1$, $R_2$, $R_3$, and $R_4$ into four linear pieces running directly from one face of the octahedron to the other. We identify the faces at the endpoints of these linear pieces, then reproduce the pieces within the complement of the Borromean rings.

Consider first the rod $R_1$, which has a lift to the line $(\frac{1}{3},\frac{2}{3},0)+(1,1,1)t$ in $\RR^3$. This rod meets the white face $B$ of \reffig{OctDecompT3} on the bottom of the cube at the point $(\frac{1}{3},\frac{2}{3},0)$, then runs to $(\frac{2}{3},1,\frac{1}{3})$ on the green crown face on the right. A translate begins at $(\frac{2}{3},0,\frac{1}{3})$ on the green crown face on the left, meets the horizontal plane $z=\frac{1}{2}$ at the point $(\frac{5}{6},\frac{1}{6},\frac{1}{2})$ on face $A$, and continues to the point $(1,\frac{1}{3},\frac{2}{3})$ on the front pink face labelled $F$. Finally, a translate starts at the back pink face at $(0,\frac{1}{3},\frac{2}{3})$ and runs to $(\frac{1}{3},\frac{2}{3},1)$ on the top white face, identified to $B$. We draw the corresponding arcs in the octahedra between the faces identified 
and do a similar process for $R_2$, $R_3$, $R_4$. The rods in the octahedra are shown in \reffig{OctaDecompT3WithPlusOmega}.

\begin{figure}
\includegraphics[width=3cm]{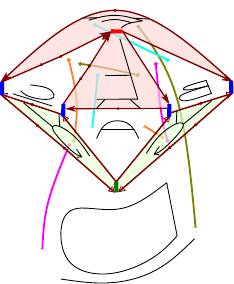}
\hspace{1cm}
\includegraphics[width=3cm]{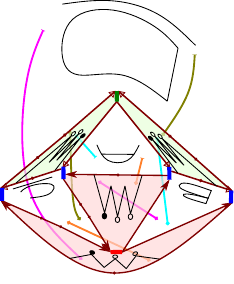}
\hspace{1cm}
\includegraphics[width=3.5cm]{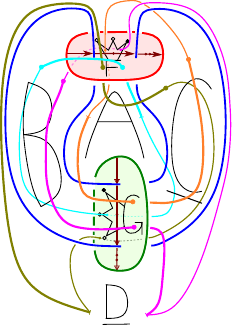}
\caption{Pieces of $^+\Omega$ in the octahedral decomposition. On the right is the link diagram with the Borromean rings in $\SS^3$.}
\label{Fig:OctaDecompT3WithPlusOmega}
\end{figure}   

The two ideal octahedra can be deformed and glued to obtain $h(^+\Omega)$ in $\SS^3 \setminus (C_x\cup C_y\cup C_z)$.  Note that when mapping from $\TT^3$ to $\SS^3$, we need to take care that the images of the rods have appropriate linking with each other, as well as with $C_x$, $C_y$, and $C_z$. This is ensured by mapping endpoints to appropriate points on the faces of the octahedra, and by observing that each arc cuts off a linear disc cobounded by the faces of the octahedra in $\TT^3$. Thus we ensure that the images in $\SS^3$ meet the faces of the octahedra in the decomposition
at appropriate intersection points, and the arc runs parallel to the faces between. The result is the link on the right of \reffig{OctaDecompT3WithPlusOmega}.

Now we can input the $7$-component link on the right of \reffig{OctaDecompT3WithPlusOmega} into SnapPy. By \refprop{010101DehnFilling}, we apply $((0,1),(0,1),(0,1))$-Dehn filling on the Borromean rings to get back $\TT^3\setminus (^+\Omega)$. 
It admits a decomposition into positively oriented ideal tetrahedra, and using the verify hyperbolicity method, we check that the result is hyperbolic.  SnapPy computes the hyperbolic volume, which can be verified in Sage up to high precision. 

Using similar methods, we find that $\TT^3\setminus (^+\Pi)$, $\TT^3\setminus (\Gamma)$, and $\TT^3\setminus (^+\Sigma)$ admit complete hyperbolic structure.

The PLink diagrams for these links are shown in \reffig{PLinkDiagrams}.
\begin{figure}
  \includegraphics[width=1.5in]{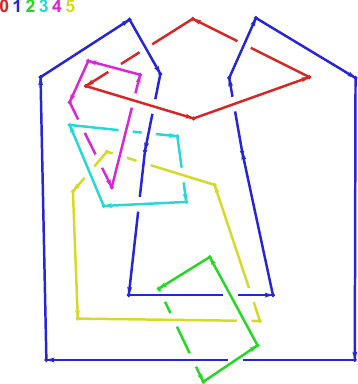}
  \hspace{.2in}
  \includegraphics[width=1.5in]{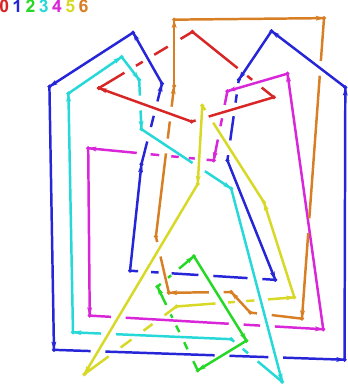}

  \includegraphics[width=1.5in]{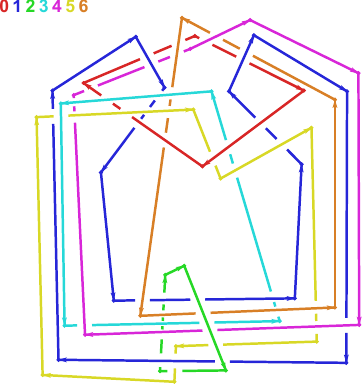}
  \hspace{.2in}
  \includegraphics[width=1.5in]{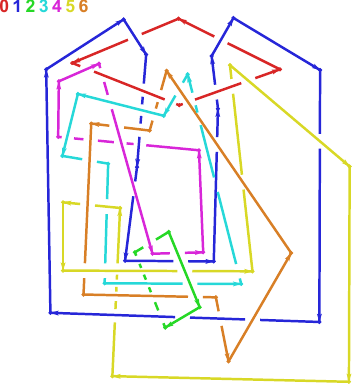}
  \caption{Shown are the union of the Borromean rings (red, blue, green) and the cubic rod packings: $h(^+\Pi)$ (top left), $h(\Gamma)$ (top right), $h(^+\Omega)$ (lower left), and $h(^+\Sigma)$ (lower right)}
  \label{Fig:PLinkDiagrams}
\end{figure}
The three links with four components are verified to have distinct hyperbolic structures, as their volumes are distinct. The other two have three and six link components, respectively, and so are also distinct. 
\end{proof}

\begin{remark}
The invariant rod packing $\Pi^*$ has primitive cubic lattice symmetry, meaning its full translational group is generated by unit translations in the $x$, $y$, and $z$-directions. The other five rod packings have body-centred cubic lattice symmetry, meaning each has additional translational symmetry in the $(\frac{1}{2}, \frac{1}{2}, \frac{1}{2})$-direction.
If we quotient the $^+\Pi$ rod packing by its full translational symmetry, we obtain a $3$-component link in $\TT^3$ with fundamental domain in a parallelepiped spanned by $(1,0,0)$, $(0,1,0)$ and $(\frac{1}{2},\frac{1}{2},\frac{1}{2})$.   The $6$-component link complement of $^+\Pi$ is a cover of this. Hyperbolic structures of $\Pi^*$ and this quotient of $^+\Pi$ can still be seen to be distinct, for example by hyperbolic volume. 
Similarly, quotients of the complements of the $\Gamma$, $^+\Omega$, and $^+\Sigma$ rod packings by the additional body-centred symmetry give $4$-component links. Because hyperbolic volumes are distinct before taking the quotient, they remain distinct after.
\end{remark}

\section{One and two arbitrary rods in the $3$-torus} 
\label{Sec:OneTwoRods}

As in the previous section, we can consider examples of rod packings individually to determine the geometric type, using the homeomorphism of \reflem{StdRodsDecomp}. For example, this may be useful for other well-known examples arising from crystallography. However, we would like to be able to make more general statements about the geometry of the complement of infinite families of rod packings. We begin that process here, considering first the simplest cases, namely one and two arbitrary rods in the $3$-torus.   

\begin{theorem} \label{Thm:MainSingleRod_TwoRods}
Let $R_1$ and $R_2$ be rod-shaped circles embedded in $\TT^3$.
\begin{itemize}
\item In the case of one rod, the $3$-manifold $\TT^3\setminus R_1$ is Seifert fibred. 

\item If $R_1$ and $R_2$ lift to be parallel to linearly dependent vectors in $\RR^3$, then $\TT^3\setminus(R_1\cup R_2)$ is Seifert fibred.
\item If $R_1$ and $R_2$ lift to linearly independent vectors, then $\TT^3\setminus(R_1\cup R_2)$ is toroidal.
\end{itemize}
In any case, when there are one or two rods, the complement is not hyperbolic. 
\end{theorem}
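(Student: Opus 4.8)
The plan is to handle all three bullets with a single device: the circle fibration of $\TT^3$ whose fibres are the lines parallel to the direction of $R_1$. Write this direction as a primitive integer vector $(a,b,c)$ with $\textrm{gcd}(a,b,c)=1$; then $(a,b,c)$ extends to a basis of $\ZZ^3$, so after applying the self-homeomorphism of $\TT^3$ induced by a suitable element of $\mathrm{SL}(3,\ZZ)$ I may assume $R_1$ is parallel to $(1,0,0)$. Both linear (in)dependence of the two rod directions and the homeomorphism type of the complement are preserved by this linear change of coordinates. With $R_1$ parallel to $(1,0,0)$, projection onto the last two coordinates realises $\TT^3$ as a trivial circle bundle $\pi\from \TT^3 \to \TT^2$ (a product $\TT^2 \times S^1$), in which $R_1$ is a single regular fibre over a point $p_1 \in \TT^2$.

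For the one-rod case I would remove an open fibred neighbourhood of the fibre $R_1$; what remains is the circle bundle restricted over the once-punctured torus $\TT^2 \setminus \{p_1\}$, which is a Seifert fibred space with torus boundary. For two rods parallel to linearly dependent primitive vectors, $R_2$ must also be parallel to $(1,0,0)$, hence is a second regular fibre over a point $p_2 \neq p_1$ (the points differ since the rods are disjoint); removing both fibred neighbourhoods leaves the circle bundle over the twice-punctured torus, again Seifert fibred.

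For the linearly independent case, the direction of $R_2$ is nowhere parallel to $(1,0,0)$, so $R_2$ is transverse to the fibration and its image $\gamma = \pi(R_2)$ is an essential simple closed curve in $\TT^2$. I would then choose a parallel pushoff $\delta$ of $\gamma$ disjoint from both $\gamma$ and the puncture $p_1$ (pushing to one side of $\gamma$ and invoking general position), and set $T = \pi^{-1}(\delta) = \delta \times S^1$. Since $\delta$ avoids $\gamma$ and $p_1$, the vertical torus $T$ is disjoint from $R_1 \cup R_2$. To prove toroidality I must show $T$ is essential. Incompressibility is inherited from $\TT^3$: because $\delta$ is essential, $\pi_1(T) = \ZZ^2$ injects into $\pi_1(\TT^3) = \ZZ^3$, so $T$ is incompressible in $\TT^3$, and any compressing disc in the smaller complement would be one in $\TT^3$. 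For the same reason $T$ is not boundary-parallel, since a parallelism to $\partial\overline{N}(R_i)$ would exhibit $T$ as the boundary of a solid torus in $\TT^3$, forcing $T$ to be compressible there. As $\TT^3\setminus(R_1\cup R_2)$ is irreducible (every sphere bounds a ball in $\TT^3$, and such a ball contains no $\pi_1$-essential rod), $T$ is an essential torus and the complement is toroidal.

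Finally, non-hyperbolicity follows from geometrisation in each case: a finite-volume hyperbolic $3$-manifold is atoroidal, ruling out the third bullet, and admits no Seifert fibration — its fundamental group contains no normal $\ZZ$, whereas the regular fibre of the product fibrations above generates a central $\ZZ$ — ruling out the first two. The hard part I anticipate is the essentiality verification for $T$ in the independent case, namely arranging $\delta$ to avoid both $\gamma$ and the puncture simultaneously and excluding boundary-parallelism to either cusp; but each sub-check reduces to the elementary incompressibility of a vertical torus in $\TT^3$, so I expect no serious technical obstruction.
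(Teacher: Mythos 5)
Your proposal is correct and takes essentially the same approach as the paper: the Seifert fibred cases come from the circle fibration of $\TT^3$ by lines parallel to the rod direction (the paper foliates directly, while your $\mathrm{SL}(3,\ZZ)$ normalization merely puts that fibration in product form), and your vertical torus $\pi^{-1}(\delta)$ is exactly the paper's essential torus $\calP(\Pi)$, the projection of a plane parallel to both rod directions and disjoint from all lifts. The essentiality and non-hyperbolicity verifications likewise match the paper's up to minor repackaging, e.g.\ your solid-torus argument ruling out boundary-parallelism in place of the paper's lifting argument.
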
 

\begin{proof} 
In the case of a single rod $R_1$, we can foliate the $3$-manifold $\TT^3$ by rod-shaped circles parallel to $R_1$. Therefore, $\TT^3\setminus R_1$ is also foliated by such circles. It follows that $\TT^3\setminus R_1$ is a Seifert fibred space. 

In the case of two rods $R_1$ and $R_2$, consider the universal cover $\mathbb{R}^3$ of $\TT^3$. The lifts of $R_1$ and $R_2$ in $\mathbb{R}^3$ are straight lines with vector directions that span a straight line or a plane. If the two vectors span a straight line, then all lines parallel to that line give a fibration of $\RR^3$. Projecting to $\TT^3$, they give a fibring by circles.  Since two of these circles are $R_1$ and $R_2$, the complement of these rod-shaped circles is a Seifert fibred space. 

Now suppose the vector directions are linearly independent, spanning a plane in $\RR^3$. 
Since there are only finitely many lifts of $R_1$ and $R_2$ within each unit cube in $\mathbb{R}^3$, we can always find a plane $\Pi$ parallel to both the lifts of $R_1$ and the lifts of $R_2$ such that $\Pi$ is disjoint from all the lifts of rods. 

Because $R_1$ and $R_2$ project to closed curves in $\TT^3$, the image $\mathcal{P}(\Pi)$ in the $3$-torus under the covering map is a plane torus $\SS^1 \times \SS^1$. Note that $\calP(\Pi)$ is $\pi_1$-injective in $M=\TT^3\setminus (R_1\cup R_2)$, and thus incompressible. It cannot be boundary-parallel in $M$ because it would lift to the boundary of a tubular neighbourhood of lines lifting $R_1$ or $R_2$ in $\RR^3$. But $\Pi$ is parallel to these rods. Thus $\calP(\Pi)$ is an essential torus. 

By Thurston's Hyperbolization Theorem, the $3$-manifold $\TT^3\setminus (R_1 \cup R_2)$ cannot admit a complete hyperbolic structure. 
\end{proof}

\section{The standard rods plus another rod} \label{Sec:StdRodsPlusRod} 

In light of the results of the previous section, the next most interesting general case might be to consider the geometric structures of the complements of three arbitrary rod-shaped circles in $\TT^3$. Indeed, experimentally, such link complements are frequently hyperbolic. However, proving hyperbolicity when an arbitrary rod is added to two arbitrary rods seems challenging. Instead, the next simplest case seems to be the case of adding an arbitrary rod to the standard rods, and considering the geometry of the complement of those four rods. The presence of the standard rods allows us to transfer geometric problems to and from the Borromean rings complement in $\SS^3$, using tools in that setting. As opposed to the case of one or two rod-shaped circles, many hyperbolic examples arise. This section characterises exactly when such link complements are hyperbolic. The goal is to prove the following.

\begin{theorem} \label{Thm:MainCharacterizeGeometryR} 
Let $a$, $b$, $c$ be integers such that $\mathrm{gcd}(a,b,c)=1$, and let $R$ be an $(a,b,c)$-rod in $\TT^3$. Let $R_x$, $R_y$, $R_z$ denote the three standard rods in $\TT^3$. 
Then the 3-manifold $\TT^3\setminus(R_x \cup R_y \cup R_z \cup R)$ admits a complete hyperbolic structure if and only if $(a,b,c) \notin \{(\pm 1, 0, 0), (0,\pm 1, 0), (0, 0, \pm 1)\}$. 
\end{theorem}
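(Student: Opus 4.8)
The plan is to prove the two directions separately, using throughout the homeomorphism $h$ of \reflem{StdRodsDecomp}, which identifies $M:=\TT^3\setminus(R_x\cup R_y\cup R_z\cup R)$ with $\SS^3\setminus(C_x\cup C_y\cup C_z\cup h(R))$, together with \refthm{ThreeStandardRods}, which tells us that the Borromean rings complement $B:=\TT^3\setminus(R_x\cup R_y\cup R_z)$ is hyperbolic, hence irreducible, atoroidal, and anannular. The point is that $M=B\setminus R$ is a drilling of the hyperbolic manifold $B$, so any essential surface in $M$ must be produced by the drilled curve $R$, and \reflem{LinkingNum} controls how $R$ sits relative to the cusps.

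For the forward (non-hyperbolic) direction, suppose $(a,b,c)=(\pm1,0,0)$, so that $R$ is a $(1,0,0)$-rod parallel to $R_x$ (the cases $\pm(0,1,0)$ and $\pm(0,0,1)$ are symmetric). First I would reduce to a single model: projecting $\TT^3\to\TT^2$ along the $x$-direction sends $R_x$ and $R$ to points and $R_y,R_z$ to the two coordinate circles, and the admissible positions for $R$ form $\TT^2$ minus these two circles and the point $R_x$, which is a once-punctured disc and hence connected. Thus $R$ may be ambiently isotoped to a parallel pushoff of $R_x$ lying on $\partial\overline{N}(R_x)$. The thin annulus $A$ between $R_x$ and this pushoff sits in a small neighbourhood of $R_x$, so it avoids $R_y$ and $R_z$; its core represents $(1,0,0)\neq 0$ in $\pi_1(\TT^3)=\ZZ^3$, so $A$ is incompressible, and since its two boundary curves lie on distinct cusps it is not boundary-parallel. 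Hence $M$ contains an essential annulus and cannot be hyperbolic.

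For the converse, assume at least two of $a,b,c$ are nonzero; since $\gcd(a,b,c)=1$, this is exactly the complement of the excluded set. I would verify hyperbolicity through Thurston's hyperbolization theorem by checking that $M$ is irreducible with incompressible toroidal boundary, atoroidal, and anannular (which also rules out the Seifert fibred and small exceptional cases, as $M$ has four cusps). Irreducibility and boundary-incompressibility are routine: an essential sphere would bound a ball in the irreducible $\TT^3$ containing no essential rod, and a compressing disc for a cusp would exhibit a rod as null-homotopic in $\TT^3$. For atoroidality and anannularity, I would analyse an essential surface $S$ (a torus, or an annulus reduced to the torus $\partial\overline{N}(S\cup\text{cusp annuli})$) according to its behaviour in $B$. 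If $S$ is incompressible in $B$, then since $B$ is atoroidal and anannular $S$ must be boundary-parallel in $B$; for $S$ to be essential in $B\setminus R$, the rod $R$ must lie in the product region between $S$ and a cusp $C_i$, so $R$ is isotopic onto $\partial\overline{N}(C_i)$. Then $\Lk(h(R),C_j)=0$ for the other two indices, so two of $a,b,c$ vanish and \reflem{LinkingNum} with $\gcd(a,b,c)=1$ forces $(a,b,c)=\pm(1,0,0)$ (up to index), the excluded case. If instead $S$ is compressible in $B$, I would compress in the irreducible $B$ and transport by $h$ to obtain a solid torus $h(W)\subset\SS^3$ disjoint from the Borromean rings with $h(R)\subset h(W)$ and $\partial h(W)$ essential in $\SS^3\setminus(C_x\cup C_y\cup C_z\cup h(R))$. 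The winding number $w$ of $h(R)$ in $h(W)$ then divides each linking number $a,b,c$ of \reflem{LinkingNum}, so $w\mid\gcd(a,b,c)=1$ and $w=1$. With winding number one, $\partial h(W)$ is boundary-parallel (hence inessential) unless $h(R)$ is nontrivially knotted inside $h(W)$.

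The main obstacle is precisely this last possibility: ruling out that the \emph{straight} rod $R$ is a knotted, winding-number-one satellite inside a solid torus of $B$. I expect to resolve it using the rigidity of the geodesic/straight structure of the rods together with the octahedral decomposition of \reflem{StdRodsDecomp}: realising $R$ as a union of straight arcs running across the two ideal octahedra of the complete hyperbolic structure on $B$, I would argue that $R$ is isotopic in $B$ to a closed geodesic (it is neither null-homotopic nor peripheral once at least two linking numbers are nonzero), which is incompatible with lying as a knotted satellite in a solid torus. A closed geodesic in a finite-volume hyperbolic manifold has a finite-volume hyperbolic complement, a standard consequence of hyperbolic Dehn surgery; this both forbids the knotted-satellite configuration and shows directly that $M=B\setminus R$ is atoroidal and anannular. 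Thurston's theorem then supplies the complete hyperbolic structure, completing the characterisation.
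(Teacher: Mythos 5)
Your overall architecture --- transferring to the Borromean rings complement via $h$, running Thurston's hyperbolization theorem, and controlling the position of $R$ with \reflem{LinkingNum} --- is the same as the paper's, and several pieces are sound: the forward direction matches \refprop{RParallelStdRod}, and your incompressible-in-$B$ case matches \reflem{TNotBdryParallel} (note that here you only need $R$ \emph{homotopic} into the cusp torus to kill two linking numbers; your stronger claim that $R$ is isotopic onto $\partial\overline{N}(C_i)$ is unnecessary and itself unjustified, since a curve in a product region $T^2\times I$ can be knotted there). The genuine gap is in the compressible case, in two places. First, after compressing $S$ in $B$ you assert that you obtain a solid torus $h(W)$ \emph{disjoint from the Borromean rings} containing $h(R)$. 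That is not forced: the compressing disc is disjoint from the rings but meets $h(R)$, and surgering yields a sphere bounding a ball $B_0$ disjoint from the rings --- yet the torus can lie inside $B_0$ in such a way that the solid torus $V$ it bounds contains \emph{all three} components $C_x, C_y, C_z$ together with $h(R)$, with the knotted exterior $\SS^3\setminus V$ contained in $B_0$. The paper rules this configuration out separately in \reflem{VnotAll3}, and the argument there is not formal: one lifts to $\RR^3$, where the finitely many lifts of $R$ meeting the lifted ball are \emph{parallel straight lines}, which cannot pass through every meridional disc of a knotted tube; this produces a compressing disc for $T_e$, a contradiction. Your case analysis silently skips this scenario.

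Second, your proposed resolution of the winding-number-one knotted-satellite obstacle does not work as stated. Knowing $R$ is neither null-homotopic nor peripheral gives a closed geodesic in its \emph{free homotopy class}, not an isotopy from $R$ to that geodesic, and the homeomorphism type of $B\setminus R$ depends on the isotopy class. Indeed, if $h(R)$ \emph{were} a knotted winding-number-one satellite inside $V$, it would still be freely homotopic to the same geodesic, so the drilling-geodesics theorem (which hyperbolizes the \emph{geodesic's} complement) cannot distinguish the two situations; the step ``$R$ is isotopic to a closed geodesic'' is essentially equivalent to the unknottedness you are trying to establish, so the argument is circular at its crux. The paper's fix is elementary and uses the straightness of $R$ directly (\reflem{Vnot0}, Case 3): the rod crosses the octahedra in monotone straight arcs, hitting the discs $D_x$, $D_y$, $D_z$ always in the same direction, so $h(R)$ meets each meridional disc of $V$ exactly once with no local knotting; hence $h(R)$ is a core of $V$ and $\partial V = h(T_e)$ is boundary-parallel, a contradiction. (Your easy subcases, winding number $0$ or $|n|\geq 2$, are handled exactly as in the paper via $\gcd(a,b,c)=1$ and \reflem{LinkingNum}.) Your anannularity sketch --- tubing the annulus into tori --- is in the spirit of the paper's \refthm{Anannular}, but it relies on the atoroidality statement, so both gaps propagate there; the paper also needs \reflem{NonHmtpCurves} to force both annulus boundary components onto a single cusp before tubing, a point your sketch elides.
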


We will use Thurston's hyperblization theorem~\cite{Thurston:3MKleinianGroupsHG}, which states that a 3-manifold that is the interior of a compact manifold with torus boundary is hyperbolic if and only if it is irreducible, boundary-irreducible, atoroidal, and anannular.
We first prove the non-hyperbolic result. 
 
\begin{proposition} \label{Prop:RParallelStdRod} 
For any rod $R$ that is parallel to the vector $(1,0,0)$, $(0,1,0)$ or $(0,0,1)$, 
the manifold $\TT^3\setminus (R_x\cup R_y\cup R_z\cup R)$ does not admit any complete hyperbolic structure. 
\end{proposition}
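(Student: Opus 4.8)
The plan is to show that $\TT^3\setminus(R_x\cup R_y\cup R_z\cup R)$ fails to be anannular by exhibiting an essential annulus, and then to invoke Thurston's hyperbolisation theorem in the form quoted above (hyperbolic $\iff$ irreducible, boundary-irreducible, atoroidal, and anannular) to conclude that the complement admits no complete hyperbolic structure. By relabelling the coordinate axes I may assume $R$ is parallel to $(1,0,0)$, i.e.\ parallel to $R_x$; the cases $R\parallel(0,1,0)$ and $R\parallel(0,0,1)$ follow by the identical argument after permuting $x$, $y$, $z$. The key geometric point is simply that $R$ and $R_x$ are \emph{parallel} rods, so there should be a flat annulus joining them, and the only obstruction to building one is the two transverse rods $R_y$ and $R_z$.

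To construct the annulus I would consider the projection $\pi\from\TT^3\to\TT^2$ forgetting the $x$-coordinate, $(x,y,z)\mapsto(y,z)$. Under $\pi$ the two rods parallel to $(1,0,0)$, namely $R_x$ and $R$, map to single points $p_x$ and $p$ in $\TT^2$, whereas $R_y$ and $R_z$ map to a $(1,0)$-curve and a $(0,1)$-curve respectively. These two curves meet in a single point, so their union cuts $\TT^2$ into one open disc. Since $R$ is embedded and disjoint from $R_y$ and $R_z$, the points $p_x$ and $p$ both lie in this disc, so I can join them by a simple arc $\gamma$ in $\TT^2$ that avoids $\pi(R_y\cup R_z)$ and meets $\pi(R_x\cup R)$ only at its endpoints. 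The preimage $\pi^{-1}(\gamma)$ is then a properly embedded annulus $A$ in the complement, disjoint from all four rods in its interior, with one boundary component on $\partial\overline{N}(R_x)$ and the other on $\partial\overline{N}(R)$, and with core a circle running in the $(1,0,0)$-direction.

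It then remains to check that $A$ is essential, that is, incompressible and not boundary-parallel. For incompressibility, the core of $A$ maps to $(1,0,0)\neq(0,0,0)$ under $\pi_1(M)\to\pi_1(\TT^3)=\ZZ^3$, so it is not null-homotopic in $M$ and $A$ is incompressible. Since the two boundary circles of $A$ lie on the distinct boundary tori $\partial\overline{N}(R_x)$ and $\partial\overline{N}(R)$, the annulus cannot be boundary-parallel. Hence $A$ is an essential annulus, $M$ is not anannular, and by Thurston's hyperbolisation theorem $M$ admits no complete hyperbolic structure.

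The only steps requiring genuine care, and which I expect to be the main obstacle, are verifying that the candidate surface $A$ is embedded and disjoint from the rods — this is precisely what forces $\gamma$ to avoid $\pi(R_y\cup R_z)$, and is where the observation that $\pi(R_y)$ and $\pi(R_z)$ cut $\TT^2$ into a \emph{single} disc does the real work — together with pinning down that $A$ is essential rather than merely incompressible. Both reduce to elementary homotopy and general-position arguments, so no hard computation is needed; the entire geometric content lies in the parallelism of $R$ and $R_x$.
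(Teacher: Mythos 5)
Your proposal is correct and follows essentially the same route as the paper: both exhibit an embedded annulus joining $R$ to the parallel standard rod, show it is essential (incompressible because its core represents a generator of $\pi_1(\TT^3)$, not boundary-parallel because its boundary circles lie on distinct boundary tori), and conclude via Thurston's hyperbolisation theorem. Your projection-to-$\TT^2$ construction simply makes explicit the annulus whose existence the paper asserts, and your omission of the boundary-compressibility check is harmless since an incompressible, boundary-compressible annulus in this setting would be boundary-parallel.
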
 

\begin{proof}
By the symmetry of the three standard rods, it suffices to show one of the three cases. Without loss of generality, let $R$ be a rod that is parallel to the vector $(0,1,0)$ and is disjoint from the other three standard rods.

Note that there is an annulus $A$ embedded in $M:=\TT^3\setminus N(R_x\cup R_y\cup R_z\cup R)$ with one boundary component on $R$ and one on $R_y$. We show this is essential, so by Thurston's hyperbolization theorem, the manifold $\TT^3\setminus (R_x\cup R_y\cup R_z\cup R)$ cannot be hyperbolic.

Observe $A$ is not boundary-parallel because its two boundary components lie on distinct link components. It cannot be boundary-compressible, because if $D$ is a disc with $\bdy D$ consisting of an arc on $A$ and an arc on $\bdy M$, then the arc on $A$ must cut off a disc in $A$. Finally, it cannot be compressible because the core of $A$ represents a generator of $\pi_1(\TT^3)=\ZZ\times\ZZ\times\ZZ$. 
\end{proof}

\subsection{Irreducibility and boundary-irreducibility}  \label{Sec:IrredAndBdryIrred}

In this section, we prove that a rod packing as in \refthm{MainCharacterizeGeometryR} is irreducible and boundary-irreducible when $(a,b,c) \notin \{(\pm 1, 0, 0), (0,\pm 1, 0), (0, 0, \pm 1)\}$. 

\begin{proposition} \label{Prop:IrredABdryIrred}  
Let $a$, $b$, $c$ be integers such that $\mathrm{gcd}(a,b,c)=1$, and let $R$ be an $(a,b,c)$-rod in $\TT^3$. Further suppose that $(a,b,c)$ is not in $\{(\pm 1, 0, 0), (0,\pm 1, 0), (0, 0, \pm 1)\}$. Then the $3$-manifold 
$M=\TT^3\setminus N(R_x \cup R_y \cup R_z \cup R)$ is irreducible and boundary-irreducible. 
\end{proposition}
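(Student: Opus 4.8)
The plan is to prove both properties directly inside $\TT^3$, using only two structural facts: that $\TT^3$ is irreducible (it is closed, orientable and aspherical, so every embedded $2$-sphere bounds a ball), and that each of the four rods represents a nonzero class in $H_1(\TT^3)\cong\ZZ\times\ZZ\times\ZZ$. The standard rods represent $(1,0,0)$, $(0,1,0)$, $(0,0,1)$, and $R$ represents $(a,b,c)\neq(0,0,0)$ since $\gcd(a,b,c)=1$; in particular no rod is null-homotopic in $\TT^3$. Note that these two inputs hold regardless of whether $(a,b,c)$ lies in the excluded set, so the hypothesis $(a,b,c)\notin\{(\pm1,0,0),(0,\pm1,0),(0,0,\pm1)\}$ will not actually be needed for this proposition; the excluded directions fail to be hyperbolic only through the essential annulus of \refprop{RParallelStdRod}, not through reducibility or boundary-compressibility.

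For irreducibility, I would take an embedded $2$-sphere $S\subset M$ and regard it as a sphere in $\TT^3$ disjoint from the rods. By irreducibility of $\TT^3$ it bounds a ball $B\subset\TT^3$. Each rod is a circle disjoint from $S$, hence lies entirely inside or entirely outside $B$; a rod lying inside $B$ would be null-homotopic in the ball and hence in $\TT^3$, contradicting that its class in $H_1(\TT^3)$ is nonzero. Thus every rod, and (since $S$ is disjoint from the tubular neighbourhoods and each such neighbourhood is connected) every component of $\overline{N}(R_x\cup R_y\cup R_z\cup R)$, lies outside $B$. Therefore $B\subset M$, so $S$ bounds a ball in $M$ and $M$ is irreducible.

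For boundary-irreducibility, I would argue by contradiction: if some boundary torus $T_i=\bdy\overline{N}(R_i)$ were compressible, the loop theorem would produce an embedded compressing disc $D\subset M$ whose boundary $\gamma$ is an essential simple closed curve on $T_i$. Writing $\gamma=p\mu_i+q\lambda_i$ in terms of the meridian $\mu_i$ and longitude $\lambda_i$, and using $[\mu_i]=0$ and $[\lambda_i]=[R_i]$ in $H_1(\TT^3)$, the fact that $\gamma$ bounds a disc in $M\subset\TT^3$ forces $q[R_i]=0$. Since $H_1(\TT^3)$ is torsion-free and $[R_i]\neq0$, we get $q=0$, so $\gamma$ is the meridian $\mu_i$. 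Then $D$ together with the meridian disc of $\overline{N}(R_i)$ forms an embedded sphere $S$ meeting $R_i$ transversally in a single point and disjoint from the other rods. But in the irreducible manifold $\TT^3$ such a sphere bounds a ball, hence is separating and has zero algebraic intersection number with the closed curve $R_i$, contradicting the single transverse intersection. Hence every $T_i$ is incompressible and $M$ is boundary-irreducible.

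The homology computations on the boundary tori are routine; the step requiring the most care is the \emph{boundary-irreducibility} argument, where one must invoke the loop theorem to replace an abstract compression by an embedded disc with essential boundary, and then verify that the resulting sphere is genuinely embedded and meets $R_i$ exactly once. I expect this meridian-case analysis, rather than irreducibility, to be the main obstacle; everything else reduces cleanly to the two structural facts about $\TT^3$ recorded above.
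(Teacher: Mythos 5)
Your proof is correct, but it takes a genuinely different route from the paper's. The paper proves both properties by transporting everything through the homeomorphism $h$ of \reflem{StdRodsDecomp} to the Borromean rings complement in $\SS^3$: for irreducibility, any sphere in $\SS^3\setminus(C_x\cup C_y\cup C_z\cup h(R))$ must enclose $C_x\cup C_y\cup C_z$ on one side (irreducibility of the Borromean rings complement), and then also $h(R)$, because $|\Lk(h(R),C_x)|=|a|\neq 0$ by \reflem{LinkingNum}; for boundary-irreducibility, a compressing disc with boundary on $\bdy\overline{N}(R_x\cup R_y\cup R_z)$ is ruled out by hyperbolicity of the Borromean rings complement, while on $\bdy\overline{N}(R)$ the meridian case is killed by producing a sphere in $\SS^3$ meeting $h(R)$ transversely once, and the $(p,q)$-case with $q\neq 0$ by torsion-freeness of $\pi_1(\TT^3)$ --- these last two steps are essentially your two cases. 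You instead work intrinsically in $\TT^3$, using only its irreducibility and the nonvanishing of each rod's class in $H_1(\TT^3)$: your meridian case closes up inside $\TT^3$ itself (a sphere bounding a ball is separating, so it cannot meet the closed curve $R_i$ in a single transverse point), and you treat all four boundary tori uniformly instead of splitting off the standard rods via hyperbolicity. Each approach buys something: the paper's keeps a single toolkit ($h$, linking numbers, properties of the Borromean rings complement) that it reuses heavily in the atoroidality and anannularity arguments of Sections~\ref{Sec:AT} and~\ref{Sec:AA}, whereas yours is more elementary and self-contained, needing no Borromean-rings machinery at all. Your observation that the hypothesis $(a,b,c)\notin\{(\pm 1,0,0),(0,\pm 1,0),(0,0,\pm 1)\}$ is never used is also accurate: the paper's own proof invokes only $\gcd(a,b,c)=1$, so that some coordinate (WLOG $a$) is nonzero and $[R]\neq\mathrm{Id}$ in $\pi_1(\TT^3)$; the exclusion is genuinely needed only to rule out the essential annulus of \refprop{RParallelStdRod} and in the torus and annulus arguments later. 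The one point to keep flagged in a final write-up is the gluing in your meridian case --- arranging $\bdy D$ and $\bdy$ of the meridian disc of $\overline{N}(R_i)$ to coincide so the union is an embedded sphere meeting $R_i$ exactly once --- but this is the same standard verification the paper performs in $\SS^3$ in its Case~BI(i), and it goes through.
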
  

\begin{proof} 
Without loss of generality, assume $a\neq 0$. By \reflem{LinkingNum}, the linking number of $h(R)$ and $C_x$ satisfies $|\mathrm{Lk}(h(R), C_x)| = |a| \neq 0$, where $h$ is the homeomorphism of \reflem{StdRodsDecomp}. 

Let $S$ be any $2$-sphere in $\SS^3\setminus(C_x\cup C_y\cup C_z \cup h(R))$. Since the complement of the Borromean rings in $\SS^3$ is hyperbolic and thus irreducible, $C_x\cup C_y\cup C_z$  will lie in a $3$-ball $B$ bounded by $S$. Since $|\mathrm{Lk}(h(R), C_x)| = |a| \neq 0$, $h(R)$ will lie in the same $3$-ball $B$. Hence, $S$ bounds a $3$-ball $\SS^3\setminus B$ in $\SS^3\setminus(C_x\cup C_y\cup C_z \cup h(R))$, so $\TT^3\setminus (R_x \cup R_y \cup R_z \cup R) \cong \SS^3\setminus (C_x\cup C_y\cup C_z\cup h(R))$ is irreducible. 

Now suppose, by way of contradiction, $M$ is $\bdy$-reducible. Then there exists a properly embedded disc $D$ in $M$ such that $\partial D$ does not bound a disc in $\partial M$. Then $\partial D$ is some nontrivial simple closed curve in a torus component of $\partial M$. Note that $\partial D$ does not lie on $\bdy \overline{N}(R_x\cup R_y\cup R_z)$, else $h(D)$ would be a properly embedded disc in the Borromean rings complement that does not bound a disc in the boundary, contradicting the hyperbolicity of that link. Therefore, $\bdy D$ lies on $\bdy \overline{N}(R)$.

\textbf{Case BI(i)}: Suppose $\bdy D$ is a (1,0)-curve of $\bdy \overline{N}(R)$, i.e.\ a meridian for the rod $R$.
The homeomorphism $h$ sends the meridional disc $D_m$ of $\overline{N}(R)\subset \TT^3$ to a meridional disc $h(D_m)$ of $h(\overline{N}(R))$ bounded by $h(\partial D)$. Since the interior of $D$ lies in $M$, we have $h(D)$ lies in $h(M)$, with interior disjoint from $h(D_m)$. Therefore, $h(D)\cup h(D_m)$ is a $2$-sphere in $\SS^3$ that intersects the loop $h(R)$ transversely only once. This is impossible because a $2$-sphere is separating in $\SS^3$, but the loop $h(R)$ is not cut into two components by the sphere. 

\textbf{Case BI(ii)}: Suppose $\partial D$ is a $(p,q)$-curve of $\partial\overline{N}(R)$ with $q\neq 0$.  Note that the homotopy class $[R]$ of the $(a,b,c)$-rod $R$ is not an identity element in $\pi_1(\TT^3)$.  Since a $(p,q)$-curve is homotopic to a $(0,q)$-curve in the solid torus $N(R)$, the homotopy class $[\bdy D]$ equals $[R^q]$, which is the homotopy class of the product path $R^q$ in $\pi_1(\TT^3)$.  Thus, we have 
\[ [R]^q = [R^q] = [\partial D] = \textrm{Id}.\]
The last equality follows from the fact that $D$ is a disc embedded in $M \subset \TT^3$, hence $\bdy D$ is homotopically trivial. But the group $\pi_1(\TT^3)$ has no non-identity element with finite order. Thus $M$ is $\bdy$-irreducible. 
\end{proof}

\subsection{Non-existence of essential tori} \label{Sec:AT}

We now turn our attention to essential tori. Throughout this subsection, denote by $M$ and $M'$ the following compact 3-manifolds: 
\begin{align}
  M & \coloneqq \TT^3\setminus N(R_x \cup R_y \cup R_z \cup R), \label{Eqn:M}\\
  M'& \coloneqq \SS^3\setminus N(C_x\cup C_y\cup C_z\cup h(R)).\label{Eqn:Mprime}
\end{align}
  
\begin{lemma} \label{Lem:TNotBdryParallel} 
If $M$ contains an embedded essential torus $T_e$, then the torus $h(T_e)$ in $M'$ cannot be boundary-parallel after $(1,0)$-Dehn-filling $h(R)$.
\end{lemma}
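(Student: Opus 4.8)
The plan is to pass to the filled manifold and argue by contradiction. The $(1,0)$-filling of $h(R)$ fills back along the meridian of $h(R)$, reglueing the solid torus $N(h(R))$ and recovering the Borromean rings complement $B:=\SS^3\setminus N(C_x\cup C_y\cup C_z)$, whose boundary consists of exactly the three tori $\partial N(C_x)$, $\partial N(C_y)$, $\partial N(C_z)$. The torus $h(T_e)$ sits inside $M'\subset B$, and I want to show it is parallel to none of these three boundary tori.

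So I would suppose, for contradiction, that $h(T_e)$ is parallel to $\partial N(C_w)$ for some $w\in\{x,y,z\}$, with parallelism region $V\cong T^2\times I$ satisfying $\partial V=h(T_e)\cup\partial N(C_w)$. The knot $h(R)$ is connected and disjoint from $\partial V$, so it lies entirely outside $V$ or entirely inside $V$; these are the two cases to rule out. In the outside case I can choose $N(h(R))$ disjoint from $V$, so that $V\subset M'$ and $h(T_e)$ is already boundary-parallel in $M'$. But $h$ restricts to a homeomorphism $M\to M'$ carrying the essential torus $T_e$ to $h(T_e)$, so $h(T_e)$ is essential --- in particular not boundary-parallel --- in $M'$, a contradiction.

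The substantive case is the inside case, $h(R)\subset V$. Here I would use that $V$ is a collar of $\partial N(C_w)$, so $[h(R)]$ lies in the image of $H_1(\partial N(C_w))=\langle m_w,l_w\rangle$. For each $w'\neq w$, both generators have zero linking with $C_{w'}$: the meridian $m_w$ bounds a disc meeting only $C_w$, and the longitude $l_w$ is homologous to $C_w$, which is unlinked from $C_{w'}$ because the Borromean rings are pairwise unlinked. Hence $\Lk(h(R),C_{w'})=0$ for both $w'\neq w$, and by \reflem{LinkingNum} two of $|a|,|b|,|c|$ vanish. Since $\gcd(a,b,c)=1$, the remaining coordinate is $\pm1$, forcing $(a,b,c)\in\{(\pm1,0,0),(0,\pm1,0),(0,0,\pm1)\}$ and contradicting the standing hypothesis of \refthm{MainCharacterizeGeometryR}.

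I expect the inside case to be the main obstacle, since it is the only place where the arithmetic hypothesis on $(a,b,c)$ enters, and it must be settled by the homological linking-number computation rather than by a direct parallelism argument. A secondary point requiring care is verifying at the outset that $(1,0)$-filling is precisely the filling that undoes the homeomorphism $h$ along the $R$-boundary, so that the filled manifold is genuinely the Borromean rings complement $B$ and \reflem{LinkingNum} is applicable.
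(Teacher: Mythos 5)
Your proposal is correct and follows essentially the same route as the paper: essentiality of $h(T_e)$ in $M'$ rules out the case where $h(R)$ misses the parallelism region (forcing $h(R)$ into the solid-torus neighbourhood of one Borromean component), after which vanishing linking numbers with the other two components, together with \reflem{LinkingNum} and $\gcd(a,b,c)=1$, force $(a,b,c)$ to be a signed standard basis vector, contradicting the standing hypothesis. The only cosmetic difference is that you justify $\Lk(h(R),C_{w'})=0$ homologically via the basis $m_w, l_w$ of $H_1(\partial \overline{N}(C_w))$, whereas the paper argues diagrammatically that $h(R)\subset N(C_x)$ admits a diagram with no crossings with $C_y$ or $C_z$; your version is, if anything, slightly more rigorous.
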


\begin{proof}
Suppose not. Since the embedded torus $h(T_e)$ becomes boundary-parallel after $(1,0)$-Dehn-filling $h(R)$, the torus $h(T_e)$ bounds a solid torus $V$ in $\SS^3$ that contains a link component of the Borromean rings as a core curve, and does not contain the other two link components. Without loss of generality, say $V$ contains $C_x$ and is disjoint from $C_y$ and $C_z$. Because $T_e$ is not boundary-parallel in $M'$, $V$ also contains $h(R)$. 

Note that $C_x$ has zero linking number with $C_y$ and $C_z$ respectively and $h(R)$ is contained in $V$, which we may view as $N(C_x)$.  We may take a link diagram such that $N(C_x)$ and $C_y$ do not form any crossings, for example as in \reffig{Homeomorphic}, right. 

Thus, there exists a link diagram such that the link components $h(R)\subset N(C_x)$ and $C_y$ do not form any crossings. Hence, we have $\Lk(h(R),C_y)=0$.
Using a similar argument, $\Lk(h(R),C_z)=0$. By \reflem{LinkingNum}, $R$ must be a $(1,0,0)$-rod. This contradicts our assumption for $R$. 
\end{proof}

\begin{lemma}\label{Lem:BoundSolidTorus}
If $T_e$ is an essential embedded torus in $M$, but $h(T_e)$ is compressible after $(1,0)$ Dehn filling $h(R)$, then $h(T_e)$ bounds a solid torus in $\SS^3$ that contains $h(R)$. 
\end{lemma}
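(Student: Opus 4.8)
The plan is to exploit the fact that $T_e$ becomes compressible only after refilling $h(R)$, and to push the resulting compressing disc back into $\SS^3$, where the region it bounds can be recognised as a solid torus via a standard lemma.

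First I would fix notation. Let $W$ denote the result of $(1,0)$-Dehn filling $h(R)$ in $M'$. Since the $(1,0)$-curve is the meridian of $h(R)$, this refills the tube $N(h(R))$ and recovers the Borromean rings complement $\SS^3\setminus N(C_x\cup C_y\cup C_z)$, and the filling solid torus is exactly $N(h(R))$, whose core is $h(R)$. Write $T\coloneqq h(T_e)$, a torus embedded in $\SS^3$ and disjoint from all four link components. Because $T_e$ is essential (hence incompressible) in $M$ and $h\from M\to M'$ of \reflem{StdRodsDecomp} is a homeomorphism, $T$ is incompressible in $M'$, while by hypothesis it is compressible in $W$. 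Fix a compressing disc $D\subset W$ for $T$; after the usual adjustment, $D$ is properly embedded with $\mathrm{int}(D)$ lying entirely in one of the two components of $\SS^3\setminus T$, which I call $Y$.

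Next I would locate $h(R)$ relative to $T$. Since $T$ is incompressible in $M'=W\setminus N(h(R))$, the disc $D$ cannot be disjoint from $N(h(R))$, for otherwise it would already be a compressing disc in $M'$. Hence $N(h(R))\cap Y\neq\emptyset$, and since $N(h(R))$ is connected and disjoint from $T$, the whole rod $h(R)$ lies in $Y$. The key step is then to reinterpret $D$ inside $\SS^3$: forgetting the link, $D$ is an embedded disc in $\overline{Y}\subset\SS^3$ with $\partial D$ essential on $T=\partial\overline{Y}$ and interior in $Y$, so $D$ is a genuine compressing disc for the torus boundary of $\overline{Y}$; that $D$ may pass through $N(h(R))$ is irrelevant once we work in $\SS^3$. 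I would then note that $\overline{Y}$ is irreducible, since any $2$-sphere in $\overline{Y}$ bounds a ball in $\SS^3$ and, as $\partial\overline{Y}=T$ is connected, that ball can be taken inside $\overline{Y}$. Applying the standard fact that a compact irreducible $3$-manifold with torus boundary admitting a compressing disc for its boundary is a solid torus, I conclude that $\overline{Y}$ is a solid torus. As $h(R)\subset Y$, this $\overline{Y}$ is the desired solid torus in $\SS^3$ bounded by $h(T_e)$ and containing $h(R)$.

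I expect the main obstacle to be the bookkeeping in the key step, namely verifying that the compressing disc produced in the filled manifold $W$ genuinely descends to a compressing disc for the region $\overline{Y}$ in $\SS^3$ (in particular, that its interior stays on a single side of $T$ and that $\overline{Y}$ is irreducible), rather than any deep topology. Once this is in place, the identification of $\overline{Y}$ as a solid torus is immediate from the standard lemma, and the containment $h(R)\subset\overline{Y}$ follows from the location argument above.
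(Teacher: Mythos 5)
Your proof is correct, and it reaches the conclusion by a slightly different route than the paper. The paper's proof first invokes Alexander's theorem that every embedded torus in $\SS^3$ bounds a solid torus $V$ on at least one side, argues that the compressing disc $D$ must intersect $h(R)$ (else $h^{-1}(D)$ would compress $T_e$ in $M$), and then disposes of the case $D\not\subset V$ by noting that compressibility from the complementary side forces that side to be a solid torus as well, so one may assume $D\subset V$ and conclude $h(R)\subset \operatorname{int}(V)$. You instead never privilege a side in advance: you take the side $\overline{Y}$ containing $\operatorname{int}(D)$, observe it is irreducible (via the Alexander--Schoenflies theorem for spheres in $\SS^3$, with the connectedness of $T$ ensuring one of the two complementary balls lies in $\overline{Y}$), and apply the standard recognition lemma that a compact irreducible $3$-manifold with compressible torus boundary is a solid torus. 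This buys you two things: it makes the paper's terse case analysis ($D\not\subset V$) unnecessary, since that case in the paper implicitly uses the very same recognition lemma; and your location argument is in fact more careful than the paper's, since a compressing disc in the filled manifold is only guaranteed to meet the filling solid torus $N(h(R))$, not the core curve itself --- your step ``$D$ meets $N(h(R))$, which is connected and disjoint from $T$, hence $h(R)\subset Y$'' cleanly repairs this, whereas the paper asserts $D\cap h(R)\neq\emptyset$ directly. The trade-off is that the paper's argument is shorter if one accepts Alexander's torus theorem as a black box, while yours is more self-contained, resting only on the sphere theorem in $\SS^3$ plus the solid-torus recognition lemma.
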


\begin{proof}
Suppose $h(T_e)$, embedded in $M'$, is compressible in $\SS^3\setminus N(C_x\cup C_y\cup C_z)$ with compressing disc $D$. The disc $D$ intersects $h(R)$ because otherwise, the disc $h^{-1}(D)$ would be a compressing disc for $T_e$ in $M$, contradicting the assumption that $T_e$ is an essential torus in $M$. 

Note that the embedded torus $h(T_e)$ bounds a solid torus $V$ in $\SS^3$.
If $D\not\subset V$, then $V$ is an unknotted solid torus, and $D$ would be a subset of a second solid torus bounded on the other side of $h(T_e)$. Therefore, it suffices to consider $D\subset V$. 
Since $D \cap h(R) \neq \emptyset$, the interior of $V$ contains $h(R)$.
\end{proof}

In Lemmas~\ref{Lem:Vnot1or2}, \ref{Lem:VnotAll3} and~\ref{Lem:Vnot0}, suppose that $T_e$ is an essential torus that is compressible in the link complement $\TT^3\setminus N(R_x\cup R_y\cup R_z)$. Let $V$ denote the solid torus in $\SS^3$ obtained from \reflem{BoundSolidTorus}, and let $D$ be a compressing disc as in the proof of that lemma.  Recall that $D$ intersects $h(R)$ but none of the links of the Borromean rings. Thus $\bdy D$ is homotopic to a $(1,0)$-curve on $\bdy V=h(T_e)$. 

\begin{lemma} \label{Lem:Vnot1or2} 
The solid torus $V$ cannot contain exactly one or exactly two link components of the Borromean rings. 
\end{lemma}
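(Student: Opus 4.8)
The plan is to surger the torus $\bdy V = h(T_e)$ along the compressing disc $D$ to produce a $2$-sphere, and then to argue that this sphere exhibits the Borromean rings as a split link, which is impossible. The key observation driving everything is that $D$ is disjoint from $C_x\cup C_y\cup C_z$, so the surgery does not disturb the link, while the components of the Borromean rings inside $V$ and those outside $V$ are forced onto opposite sides of the resulting sphere.

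First I would record the properties of $V$ and $D$ available from the preceding results: by \reflem{BoundSolidTorus} the solid torus $V$ contains $h(R)$, and by the discussion preceding this lemma $D\subset V$ is a compressing disc whose boundary is a $(1,0)$-curve, i.e.\ a meridian of $V$, with $D\cap h(R)\neq\emptyset$ but $D$ disjoint from $C_x\cup C_y\cup C_z$. Cutting $V$ along the meridian disc $D$ yields a $3$-ball $B_0$; this holds whether or not $V$ is knotted, since a meridian disc cuts any solid torus into a ball. Because every link component lying in $V$ is disjoint from $D$, all such components are contained in $B_0$.

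Next I would analyze the complementary region. Writing $W=\SS^3\setminus \mathrm{int}(V)$, the sphere $S'=\bdy B_0$ produced by the compression bounds $B_0$ on one side and a second $3$-ball $B=\SS^3\setminus \mathrm{int}(B_0)$ on the other, with $W\subset B$. Hence every link component lying outside $V$ is contained in $B$. Since $\bdy V$ and $D$ are each disjoint from the Borromean rings, so is $S'$; thus $S'$ is a $2$-sphere disjoint from $C_x\cup C_y\cup C_z$ that separates the components inside $V$ from those outside.

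Finally, I would suppose toward a contradiction that $V$ contains exactly one or exactly two of the three components, i.e.\ a nonempty proper subset. Then both sides of $S'$ contain a nonempty sublink, so $S'$ splits the Borromean rings into two nonempty pieces, contradicting the fact that the Borromean rings are a non-split link (their complement is hyperbolic, hence irreducible, so no embedded sphere can separate the components). This single argument handles the one-component and two-component cases uniformly, forcing $V$ to contain either none or all three components. The step to verify most carefully is the separation claim of the previous paragraph---that the surgered sphere is genuinely disjoint from the link and places the inside and outside components on opposite sides---since this is exactly what converts the topological surgery on $\bdy V$ into an honest splitting of the Borromean rings.
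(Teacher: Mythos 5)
Your proposal is correct and matches the paper's proof essentially verbatim: the paper also surgers $h(T_e)$ along the meridional disc $D$ (which is disjoint from $C_x\cup C_y\cup C_z$) to obtain an embedded $2$-sphere separating the component(s) inside $V$ from those outside, contradicting irreducibility of the Borromean rings complement. Your extra care in verifying that the surgered sphere is disjoint from the link and genuinely separates the two nonempty sublinks is a slightly more detailed write-up of the same argument, not a different route.
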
 

\begin{proof}
Suppose not. Surger the torus $T_e$ along the embedded meridional disc $D$. 
This gives an embedded $2$-sphere $S$ in $\SS^3\setminus N(C_x\cup C_y\cup C_z)$ with exactly one or two components of $C_x$, $C_y$, $C_z$ on one side, and the other(s) on the opposite side. Thus $S$ is a 2-sphere that does not bound a $3$-ball on either side. This contradicts the fact that the complement of Borromean rings in $\SS^3$ is irreducible. 
\end{proof}

\begin{lemma} \label{Lem:VnotAll3} 
The solid torus $V$ cannot contain all three link components of the Borromean rings. 
\end{lemma}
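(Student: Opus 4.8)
The plan is to suppose, for contradiction, that $V$ contains all three Borromean components and then transfer the torus $T_e$ to the single-rod complement $\TT^3\setminus N(R)$, where \refthm{MainSingleRod_TwoRods} gives us a Seifert fibred structure that admits no separating essential torus. Throughout, $h$ denotes the homeomorphism of \reflem{StdRodsDecomp}, $V$ and the meridian disc $D$ are as supplied before the lemma (so $\bdy D$ is a meridian of $V$, $D$ is disjoint from $C_x\cup C_y\cup C_z$ but meets $h(R)$), and $W:=\SS^3\setminus\mathrm{int}(V)$.

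First I would dispose of the case that $V$ is unknotted. If $V$ is unknotted then $W$ is a solid torus, and since all four link components lie in $V$, the exterior $W$ is disjoint from the entire link $C_x\cup C_y\cup C_z\cup h(R)$. A meridian disc of $W$ then compresses $h(T_e)=\bdy V$ in $M'$, and its preimage under $h$ compresses $T_e$ in $M$, contradicting that $T_e$ is essential. Hence the core $K$ of $V$ is a nontrivial knot and $W$ is a nontrivial knot exterior.

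Next I would pass to $N:=\TT^3\setminus N(R)$. Because $W$ is disjoint from $C_x,C_y,C_z$, its preimage $\widetilde W:=h^{-1}(W)$ is disjoint from the standard rods, so filling the standard rods back in realizes $\widetilde W$ as an embedded exterior of the nontrivial knot $K$ inside $N$, with $\bdy\widetilde W=T_e$; in particular $T_e$ is a \emph{separating} torus in $N$. The heart of the argument is to verify that $T_e$ is essential in $N$. Incompressibility towards $\widetilde W$ is automatic, as $\widetilde W$ is a nontrivial knot exterior. It is not boundary-parallel, for otherwise $N$ would be homeomorphic to $\widetilde W$ (a knot exterior is not a product $T^2\times I$), whereas $H_1(N)\cong\ZZ^3$ has rank $3$ while a knot exterior has first homology of rank $1$. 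Incompressibility towards the standard-rods side is the main obstacle: here one uses that every meridian disc of $V$ meets $h(R)$ — which is precisely why $T_e$ is incompressible in $M$ on that side — together with an innermost-disc argument. Since the standard rods, being $h^{-1}(C_x\cup C_y\cup C_z)$, lie in the ball $V\setminus D$ cut off by the meridian disc, a putative compressing disc for $T_e$ in $N$ can be isotoped to meet the filling solid tori $N(R_x)\cup N(R_y)\cup N(R_z)$ minimally; removing those intersections (or, if they cannot be removed, reaching a contradiction with irreducibility) would yield a compressing disc already lying in $M$, contradicting essentiality of $T_e$ there.

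Finally I would invoke the geometry of $N$. By \refthm{MainSingleRod_TwoRods} the single-rod complement $N=\TT^3\setminus N(R)$ is Seifert fibred; concretely it is the trivial circle bundle $P\times S^1$ over a once-punctured torus $P$, obtained by deleting one fibre of $\TT^3=\TT^2\times S^1$. Every essential torus in $P\times S^1$ is vertical, hence isotopic to $c\times S^1$ for an essential simple closed curve $c\subset P$; but a once-punctured torus has no essential separating simple closed curve, so every essential torus in $N$ is non-separating. This contradicts the fact that $T_e$ is a separating essential torus in $N$, completing the proof. I expect the genuinely delicate point to be the incompressibility of $T_e$ towards the standard-rods side — that is, controlling how a compressing disc in $N$ can interact with the three filling solid tori after Dehn filling the standard rods.
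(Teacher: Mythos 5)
Your reduction to the Seifert fibred single-rod complement has a genuine gap at exactly the step you flag as delicate: incompressibility of $T_e$ in $N=\TT^3\setminus N(R)$ on the standard-rods side. Your proposed fix --- isotope a putative compressing disc to meet the filling solid tori $N(R_x)\cup N(R_y)\cup N(R_z)$ minimally, then either remove the intersections or contradict irreducibility --- is not a valid argument. Intersections of a compressing disc with Dehn filling solid tori are in general essential and cannot be isotoped away; removing them by surgery leaves a planar surface with extra meridional boundary components on the rod tori, not a compressing disc in $M$. Indeed, the very situation this lemma sits in exhibits the failure mode: $h(T_e)$ is incompressible in $M'$, yet it compresses after $(1,0)$-filling $h(R)$ via a disc $D$ that \emph{necessarily} meets the filling solid torus (this is the content of \reflem{BoundSolidTorus}). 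Filling the three standard rods instead of $h(R)$ is not formally any different, so some argument specific to the geometry of rods is required, and your sketch supplies none. Without this step the endgame --- $N\cong P\times S^1$ for $P$ a once-punctured torus, every essential torus vertical hence non-separating, contradicting that $T_e$ separates off the knot exterior $\widetilde{W}$ --- never gets off the ground, even though that endgame itself is correct, as are your preliminary steps (knottedness of $V$; excluding boundary-parallelism in $N$ by comparing $H_1$ of rank $3$ with rank $1$).

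It is instructive to compare with the paper's proof, which sidesteps the survival-under-filling problem entirely by never passing to $N$. There, one surgers $h(T_e)$ along $D$ to get a sphere bounding a ball $B$ disjoint from $C_x\cup C_y\cup C_z$, isotopes $h(T_e)$ into $B$, pulls back to a ball in $\TT^3$ meeting $R$ finitely often, and lifts to the universal cover $\RR^3$. The lifts of $R$ meeting the lifted ball form finitely many \emph{parallel straight lines}, and such lines cannot pass through every meridional disc of a knotted tube in $\RR^3$; hence the lift $\widetilde{T}_e$ admits a compressing disc disjoint from all lifts of rods, which projects to a compressing disc for $T_e$ in $M$ itself --- contradicting essentiality in $M$ directly. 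The point is that the straight-line geometry of rods in the universal cover is the irreplaceable input; any honest completion of your incompressibility step would need essentially this same covering-space argument, and once you have it, the paper's contradiction is already in hand, making the Seifert fibred detour through $N$ unnecessary.
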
 

\begin{proof}  
Suppose the solid torus $V$ contains all of $C_x$, $C_y$, and $C_z$. By \reflem{BoundSolidTorus}, it also contains $h(R)$. Since $\bdy V =h(T_e)$ is essential in $M'$, but $V$ contains all four link components, the solid torus $V$ must form a non-trivial knot in $\SS^3$, and $\SS^3\setminus V$ is a nontrivial knot complement. 
Surgering $h(T_e)$ along $D$ in the Borromean rings complement gives a sphere in $\SS^3$ that must bound a ball $B\subset \SS^3$ disjoint from $C_x$, $C_y$, and $C_z$. It follows that $D$ lies inside $B$, and $h(R)$ runs through $B$. By isotoping slightly, we may take $h(T_e)$ to lie inside of the ball $B$, and hence all of the knot complement $\SS^3-V$ lies in $B$. 

Now consider $h^{-1}(B)$ in $\TT^3$. This is a ball that contains $T_e$ and a submanifold $h^{-1}(\SS^3-V)$, and must also meet the rod $R$. By compactness, it meets $R$ finitely many times. The ball $h^{-1}(B)$ must lift to a ball $\tilde{B}$ in the universal cover $\RR^3$, disjoint from lifts of the standard rods, but intersecting finitely many lifts of $R$. Observe that a lift $\tilde{T}_e$ of $T_e$ must lie inside $\tilde{B}$ and be disjoint from all lifts of rods. The disc $h^{-1}(D)$ lifts to a disc meeting lifts of $h(R)$.

Then the torus $\tilde{T}_e$ has to be a knotted torus in the ball $\tilde{B}$, but the finite set of lifts of $R$ that meet $B$ is a union of parallel straight lines. These cannot intersect each meridional disc of a knotted tube in $\RR^3$.  A compressing disc for $\tilde{T}_e$ would thus exist, and project to a compressing disc for $T_e$, contradicting the assumption that $T_e$ is incompressible.
\end{proof}

\begin{lemma} \label{Lem:Vnot0}
The solid torus $V$ cannot be disjoint from all three link components of the Borromean rings. 
\end{lemma}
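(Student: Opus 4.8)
The plan is to pull the solid torus $V$ back to the $3$-torus and argue there, paralleling the universal-cover argument already used in \reflem{VnotAll3}. Since $V$ is disjoint from $C_x\cup C_y\cup C_z$, it lies inside $\SS^3\setminus N(C_x\cup C_y\cup C_z)$, so the homeomorphism $h$ of \reflem{StdRodsDecomp} carries it back to a solid torus $W:=h^{-1}(V)\subset \TT^3\setminus N(R_x\cup R_y\cup R_z)$ with $\bdy W = T_e$ and $R\subset \mathrm{int}(W)$. The compressing disc $D$ becomes a meridian disc $D_W=h^{-1}(D)$ of $W$ meeting $R$ but disjoint from the three standard rods.

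First I would pin down the core of $W$. Let $\gamma$ be the core of $W$. The inclusion induces $\pi_1(W)=\ZZ\to\pi_1(\TT^3)=\ZZ^3$ with image $\ZZ\cdot[\gamma]$, and since $R\subset W$ we have $[R]=(a,b,c)=n\,[\gamma]$, where $n$ is the winding number of $R$ in $W$. Because $\textrm{gcd}(a,b,c)=1$, the class $(a,b,c)$ is primitive in $\ZZ^3$, which forces $n=\pm 1$ and $[\gamma]=\pm(a,b,c)$. In particular $\gamma$ is essential in $\TT^3$, and $R$ meets every meridian disc of $W$ algebraically exactly once; this is why no meridian disc of $W$ can be made disjoint from $R$, and hence why $T_e$ remains incompressible from inside $W$ in $M$.

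The goal is then to show that $T_e=\bdy W$ is boundary-parallel to $\bdy N(R)$ in $M$, which contradicts the assumption that $T_e$ is essential. Equivalently, I want $R$ to be isotopic to $\gamma$ inside $W$ through the complement of the standard rods. To see this I would lift to the universal cover $\RR^3$. A lift $\tilde W$ of $W$ is an infinite tube in the direction $(a,b,c)$, disjoint from every lift of $R_x$, $R_y$, $R_z$ (three families of coordinate-parallel straight lines, as recorded in \refeqn{RodFormulas}); by the winding-number computation its interior contains exactly one lift $\tilde R$ of $R$, which is a straight line in the direction $(a,b,c)$. If $\tilde W$ is an unknotted tube, it is isotopic, rel the standard-rod lifts, to a thin straight neighbourhood $N(\tilde R)$; this descends to an isotopy of $W$ onto $N(R)$ in $M$, so $W\setminus N(R)\cong \TT^2\times I$ and $T_e$ is boundary-parallel, the desired contradiction.

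The main obstacle is therefore to rule out the possibility that $\tilde W$ is a knotted tube. This is exactly the phenomenon treated in \reflem{VnotAll3}: were $\tilde W$ knotted, its knotting would be confined to a ball, and the straight line $\tilde R$ threading it with winding number one would have to meet each meridional disc of the knotted tube exactly once, which a straight line in $\RR^3$ cannot do. This yields a compressing disc for $T_e$ and contradicts its incompressibility. Hence no knotting occurs, $T_e$ is boundary-parallel in $M$, and so $V$ cannot be disjoint from all three link components of the Borromean rings.
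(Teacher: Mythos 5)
Your reduction to winding number $\pm 1$ is correct, and in fact streamlines the paper's argument: primitivity of $(a,b,c)$ in $\pi_1(\TT^3)\cong\ZZ^3$ disposes in one stroke of what the paper handles as Cases 1 and 2 via linking numbers. The gap is in the remaining case $n=\pm 1$, which is the heart of the lemma, and your ``knotted tube'' subcase fails for a reason you yourself record earlier: since $\tilde R$ has winding number $\pm 1$ in $\tilde W$, \emph{every} meridian disc of $\tilde W$ meets $\tilde R$, so the mechanism of \reflem{VnotAll3} --- find a meridional disc missing the straight lines and promote it to a compressing disc --- cannot run here. It worked in \reflem{VnotAll3} only because there the straight lines were not contained in the knotted tube with nonzero winding number. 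In your situation no compressing disc for $T_e$ can arise from inside $W$, so ``contradicts its incompressibility'' is not available, and nothing in the argument rules out the genuine danger: a winding-number-one satellite pattern (a Mazur-type pattern), in which $R$ winds once around $W$ yet is not isotopic to the core, so that $\bdy W$ would be essential. Winding number one alone does not imply core-isotopy; excluding such patterns is exactly what must be proved, and your knotted/unknotted dichotomy assumes it away. Relatedly, ``its knotting would be confined to a ball'' is false for the periodic tube $\tilde W$: any knotting repeats in every fundamental domain under translation by $(a,b,c)$.

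The unknotted subcase also contains an unproved step: the isotopy of $\tilde W$ onto a thin straight neighbourhood of $\tilde R$ must avoid the infinitely many lifts of $R_x\cup R_y\cup R_z$ \emph{and} must be equivariant under the deck translation by $(a,b,c)$ in order to descend to $\TT^3$; neither is justified. The paper closes the $n=\pm 1$ case differently, using the straightness of the rod where it actually bites: $h(R)$ crosses each flat Seifert disc $D_x$, $D_y$, $D_z$ always in the same direction (monotonicity of the $(a,b,c)$-rod), so its algebraic intersection number $\pm 1$ with each meridional disc $V\cap D_i$ upgrades to geometric intersection exactly one; hence $h(R)$ is isotopic to the core of $V$ and $\bdy V = h(T_e)$ is boundary-parallel in $M'$, contradicting essentiality. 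If you wish to keep your covering-space framing, you would need an analogous same-sign argument for intersections of the straight line $\tilde R$ with a suitable family of flat meridional discs of $\tilde W$, rather than the dichotomy as stated.
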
 

\begin{proof}
Suppose on the contrary that the solid torus $V$ does not contain any link component of the Borromean rings. Then $V$ contains only $h(R)$. 

Note that $[h(R)]$ represents a homotopy class in $\pi_1(V) \cong \pi_1(\SS^1) \cong \mathbb{Z}$ and that any homotopy of $h(R)$ within the solid torus $V$ does not change the linking number between $h(R)$ and each of the three link components of the Borromean rings. By \reflem{LinkingNum}, these are $\pm a$, $\pm b$, $\pm c$ respectively. Since the solid torus $V$ contains $h(R)$, it must also link $C_x$, $C_y$, $C_z$. 

\textbf{Case 1:} Suppose the homotopy class $[h(R)]$ corresponds to an integer $n$ in $\mathbb{Z}\setminus\{1,0,-1\}$ via the  group isomorphism between $\pi_1(V)$ and $\mathbb{Z}$. Then $|a|=|n\Lk(V,C_x)|$, $|b|=|n\Lk(V,C_y)|$, and $|c|=|n\Lk(V,C_z)|$. But this contradicts the fact that $a$, $b$, $c$ are relatively prime. 

\textbf{Case 2:} Suppose the homotopy class $[h(R)]$ corresponds to the trivial element $0 \in \mathbb{Z}$.  Then $h(R)$ is homotopic to a loop that bounds a disc.  The embedded circle $h(R)$ would not be linked to any other link components, contradicting \reflem{LinkingNum}.  

\textbf{Case 3:} Suppose the homotopy class $[h(R)]$ corresponds to an integer $1 \textrm{ or } -1 \in \mathbb{Z}$. The rod parallel to $(a,b,c)$ hits each face of the octahedra in the cube in the same direction. Hence $h(R)$ hits each disc $D_x$, $D_y$, $D_z$ bounded by $C_x$, $C_y$, $C_z$ respectively transversely in the same direction. 
Thus it must meet each meridional disc $V\cap D_i$ exactly once in the same direction. Since the rod is monotonic between faces, there is no local knotting within the octahedron. Thus $h(R)$ meets each meridional disc of $V$ once. 
But then $\partial V = h(T_e)$ is boundary-parallel in $M'$, a contradiction.
\end{proof}

\begin{proposition} \label{Prop:TNotCompressible}
If $M$ admits an embedded essential torus $T_e$, then the torus $h(T_e)$ in $M'$ cannot be compressible after $(1,0)$-Dehn-filling $h(R)$.  
\end{proposition}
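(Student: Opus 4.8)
The plan is to assemble, by contradiction, the four lemmas already established in this subsection, since the substantive geometric work has been isolated into them. Assume $M$ contains an essential torus $T_e$, and suppose toward a contradiction that $h(T_e)$ is compressible after $(1,0)$-Dehn-filling $h(R)$. Filling the meridian of $h(R)$ recovers the Borromean rings complement $\SS^3\setminus N(C_x\cup C_y\cup C_z)$, so this supposition is precisely the standing hypothesis under which the solid torus $V$ was introduced after \reflem{BoundSolidTorus}. Thus I would first invoke \reflem{BoundSolidTorus} to obtain a solid torus $V\subset\SS^3$ with $\bdy V=h(T_e)$ and $h(R)\subset V$.

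Next I would observe that $V$ contains some (possibly empty) subcollection of the three Borromean components $C_x$, $C_y$, $C_z$, and that the only relevant invariant for the argument is the number of components it contains. I would then run through the exhaustive cases: if $V$ contains exactly one or exactly two of the components, apply \reflem{Vnot1or2}; if it contains all three, apply \reflem{VnotAll3}; and if it contains none, apply \reflem{Vnot0}. Each of these lemmas rules out its case, so every possibility (zero, one, two, or three components lying inside $V$) leads to a contradiction, and the proposition follows.

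Because the genuine content lives in the preceding lemmas, the proof here is essentially bookkeeping rather than a new geometric argument. The only points requiring care are that the four cases truly exhaust all configurations and that the solid torus produced by \reflem{BoundSolidTorus} is the same $V$ referenced by \reflem{Vnot1or2}, \reflem{VnotAll3}, and \reflem{Vnot0}; verifying that the hypothesis $h(R)\subset V$ carries through each branch is the main thing to confirm, and I expect no genuine obstacle beyond this verification.
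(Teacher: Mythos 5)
Your proposal is correct and matches the paper's own proof essentially verbatim: both proceed by contradiction, invoke \reflem{BoundSolidTorus} to produce the solid torus $V$ containing $h(R)$, and then exhaust the cases of zero, one or two, and three Borromean components inside $V$ via \reflem{Vnot0}, \reflem{Vnot1or2}, and \reflem{VnotAll3} respectively. Your added care about $V$ being the same solid torus referenced in the three case lemmas is exactly the role of the paper's standing-notation paragraph following \reflem{BoundSolidTorus}, so there is nothing to fix.
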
 

\begin{proof}
Suppose not. By \reflem{BoundSolidTorus}, $h(T_e)$ bounds a solid torus in $\SS^3$ that contains $h(R)$. By \reflem{Vnot1or2}, it must contain either all or none of the link components of the Borromean rings. By \reflem{VnotAll3}, it cannot contain all three. By \reflem{Vnot0} it cannot contain none. Thus no such torus exists.
\end{proof}

\begin{theorem} \label{Thm:Atoroidal}  
Let $a$, $b$, $c$ be integers such that $\mathrm{gcd}(a,b,c)=1$. If the rod $R$ is parallel to the vector $(a,b,c) \in (\mathbb{Z}\times\mathbb{Z}\times\mathbb{Z}) \setminus \{(\pm{1},0,0),(0,\pm{1},0),(0,0,\pm{1})\}$, then the $3$-manifold 
$\TT^3\setminus(R_x \cup R_y \cup R_z \cup R)$ is atoroidal. 
\end{theorem}  

\begin{proof}
Suppose there were an essential torus $T_e$ in $M\subset \TT^3$. Then $h(T_e)$ is an essential torus in $M'\subset \SS^3$. As the complement of the Borromean rings in $\SS^3$ admits a complete hyperbolic structure, the essential torus $h(T_e)$ is not essential after $(1,0)$-Dehn filling $h(R)$.  Therefore, $h(T_e)$ either becomes boundary-parallel or compressible after the Dehn filling. By \reflem{TNotBdryParallel}, it cannot be boundary-parallel. By \refprop{TNotCompressible}, it cannot be compressible. Hence, $M$ is atoroidal. 
\end{proof}

\subsection{Non-existence of essential annuli} \label{Sec:AA}

\begin{theorem} \label{Thm:Anannular}  
Let $a$, $b$, $c$ be integers such that $\mathrm{gcd}(a,b,c)=1$. If the rod $R$ is parallel to the vector $(a,b,c) \in (\mathbb{Z}\times\mathbb{Z}\times\mathbb{Z}) \setminus \{(\pm{1},0,0),(0,\pm{1},0),(0,0,\pm{1})\}$, then the $3$-manifold 
$\TT^3\setminus(R_x \cup R_y \cup R_z \cup R)$ is anannular. 
\end{theorem}  

To prove \refthm{Anannular}, we will first need the following more general result about curves on neighbourhoods of rod-shaped circles in $\TT^3$. 

\begin{lemma} \label{Lem:NonHmtpCurves} 
Suppose $R_1$ and $R_2$ are two rod-shaped circles in $\TT^3$ that are not parallel to one another. Then a $(p,q)$-curve around $R_1$ is not homotopic to an $(r,s)$-curve around $R_2$ in $\TT^3\setminus (R_1\cup R_2)$, where $p,q,r,s\in \ZZ$ are such that $(p,q), (r,s)\neq (0,0)$. 
\end{lemma}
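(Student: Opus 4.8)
The plan is to separate the meridional directions of $R_1$ and $R_2$ by detecting how each links the rods, after passing to the universal cover $\calP\from\RR^3\to\TT^3$ where such linking is well defined. Write $v_1,v_2\in\ZZ^3$ for the primitive direction vectors of $R_1,R_2$; since the rods are not parallel, $v_1$ and $v_2$ are linearly independent. Let $\mu_i,\lambda_i$ be the meridian and longitude on $\bdy\overline{N}(R_i)$, so that the $(p,q)$-curve on $R_1$ is the class $p\mu_1+q\lambda_1$ and the $(r,s)$-curve on $R_2$ is $r\mu_2+s\lambda_2$ in $\pi_1$ of the respective boundary tori. Suppose for contradiction these are freely homotopic in $X:=\TT^3\setminus(R_1\cup R_2)$. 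The first step is to project to $\pi_1(\TT^3)=\ZZ^3$: under the inclusion-induced map $\pi_1(X)\to\pi_1(\TT^3)$ each meridian $\mu_i$ dies, since it bounds a meridian disc in $N(R_i)\subset\TT^3$, while $\lambda_i\mapsto[R_i]=v_i$. As $\pi_1(\TT^3)$ is abelian, free homotopy classes have well-defined images, so a free homotopy in $X$ forces $q\,v_1=s\,v_2$. Linear independence of $v_1,v_2$ gives $q=s=0$, and since $(p,q),(r,s)\neq(0,0)$ we are reduced to showing $p\mu_1$ with $p\neq0$ is not freely homotopic in $X$ to $r\mu_2$ with $r\neq0$.

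Next I would lift to the cover. Restricting $\calP$ to the complement gives a regular $\ZZ^3$-cover $\hat X=\RR^3\setminus\calP^{-1}(R_1\cup R_2)\to X$, corresponding to $\ker(\pi_1(X)\to\ZZ^3)$. Here $\calP^{-1}(R_1\cup R_2)$ is a disjoint union of straight lines, and crucially every lift of $R_1$ is disjoint from every lift of $R_2$ because $R_1\cap R_2=\emptyset$. Since $\mu_1,\mu_2$ lie in the kernel, the loops $p\mu_1$ and $r\mu_2$ lift to honest loops in $\hat X$: the lift of $p\mu_1$ is $p$ times a meridian of a single line $\ell_1$ covering $R_1$, and by the homotopy lifting property a free homotopy in $X$ lifts to a free homotopy in $\hat X$ carrying this loop to a lift of $r\mu_2$, which is $r$ times a meridian of some line $\ell_2'$ covering $R_2$. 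Now I would extract the contradiction from linking: because $\RR^3\setminus\ell_1\simeq\SS^1$, the inclusion $\hat X\hookrightarrow\RR^3\setminus\ell_1$ defines a linking-number homomorphism $\Lk(\cdot,\ell_1)\from H_1(\hat X)\to\ZZ$ that is invariant under free homotopy in $\hat X$. The lift of $p\mu_1$ has $\Lk(\cdot,\ell_1)=\pm p\neq0$, whereas the lift of $r\mu_2$ is $r$ meridians of $\ell_2'$, whose small meridian disc misses $\ell_1$ (the two lines are disjoint), so its linking number with $\ell_1$ is $0$. This contradicts invariance of $\Lk(\cdot,\ell_1)$ under the lifted free homotopy, completing the argument.

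The main obstacle is precisely that linking numbers with the rods are \emph{not} defined directly in $\TT^3$: each $R_i$ is homologically nontrivial and bounds no surface, so there is no ambient linking homomorphism downstairs. Lifting to $\hat X=\RR^3\setminus\calP^{-1}(R_1\cup R_2)$ is what restores well-defined linking with each individual line and, at the same time, keeps lifts of $R_1$ disjoint from lifts of $R_2$ so that meridians of the two rods are distinguished. The remaining care is the routine but essential covering-space bookkeeping: checking that the meridians lie in $\ker(\pi_1(X)\to\ZZ^3)$ so the relevant loops lift to loops, and that a free homotopy downstairs lifts to a free homotopy between the correct lifts upstairs.
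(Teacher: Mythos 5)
Your proposal is correct, and its first half is the same as the paper's: the paper's Case~2 is exactly your projection to $\pi_1(\TT^3)\cong\ZZ^3$, where meridians die, longitudes go to the direction vectors, and linear independence kills any homotopy unless $q=s=0$. The difference is in the remaining meridian--meridian case. The paper (its Case~1) disposes of it in two lines by forgetting one rod: a homotopy in $\TT^3\setminus(R_1\cup R_2)$ is in particular a homotopy in $\TT^3\setminus R_1$, where the meridian of $R_2$ is null-homotopic (it bounds its meridian disc, disjoint from $R_1$), so the meridian of $R_1$ would be null-homotopic in $\TT^3\setminus R_1$ --- contradiction. That argument quietly relies on the fact that a meridian of a single rod is homotopically nontrivial in the one-rod complement, which the paper asserts without justification (it follows, e.g., from $\TT^3\setminus R_1$ being a once-punctured torus times $S^1$, with the meridian the boundary commutator, or from incompressibility of the boundary torus). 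Your route is longer but self-contained and proves precisely that missing fact along the way: lifting to the $\ZZ^3$-cover $\RR^3\setminus\calP^{-1}(R_1\cup R_2)$ and composing with the linking homomorphism $H_1(\RR^3\setminus\ell_1)\cong\ZZ$ shows $\mu_1^p$ is not even freely homotopic to $\mu_2^r$ in the complement for any $p\neq 0$. Your bookkeeping is sound: meridians do lie in $\ker(\pi_1\to\ZZ^3)$ so the loops lift, free homotopies lift by the homotopy lifting property, lifts of $R_1$ are disjoint from lifts of $R_2$ since the rods are disjoint, and the linking number is a free-homotopy invariant because it factors through $H_1$. A small bonus of your version: it handles arbitrary nonzero powers $p,r$ uniformly, whereas the paper's Case~1 is stated only for $(1,0)$-curves (which suffices for its applications, since boundary curves of essential annuli are simple, but leaves the general non-primitive case to the reader).
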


\begin{proof}
Since $R_1$ and $R_2$ are simple closed curves in $\TT^3$, there exist $a$, $b$, $c$, and $\ell$, $m$, $n\in\mathbb{Z}$ with $\mathrm{gcd}(a,b,c) = \mathrm{gcd}(\ell,m,n) = 1$ such that $R_1$ is parallel to $(a,b,c)$ and $R_2$ is parallel to $(\ell,m,n)$. 

\textbf{Case 1:} $q=0$ and $s=0$. 
If a $(1,0)$-curve $\gamma_1$ around $R_1$ is homotopic to a $(1,0)$-curve $\gamma_2$ around $R_2$ in $\TT^3\setminus (R_1\cup R_2)$, then $\gamma_1$ is homotopic to $\gamma_2$ in $\TT^3\setminus R_1$. Thus $\gamma_1$ is homotopically trivial in $\TT^3\setminus R_1$. Contradiction. 

\textbf{Case 2:} $q\neq 0$ or $s \neq 0$. Without loss of generality, assume $q\neq 0$.
Observe that the $(p,q)$-curve around $R_1$ is homotopic to the product path $(R_1)^q$ in $\TT^3$ and the $(r,s)$-curve around $R_2$ is homotopic to $(R_2)^s$ in $\TT^3$. If these are homotopic, $[R_1]^q = [R_2]^s$ in the fundamental group of $\TT^3$. But the two rods are not parallel, so $[(R_1)]^q$ and $[(R_2)]^s$ are distinct elements in the fundamental group of $\TT^3$. 
\end{proof}

As in the previous subsection, we will denote by $M$ and $M'$ the compact 3-manifolds of equations~\eqref{Eqn:M} and~\eqref{Eqn:Mprime}.
We will prove \refthm{Anannular} by contradiction. So suppose it does not hold. Then there is an essential annulus in $M$, which will be denoted by $A_e$. Denote the two connected components of  $\partial A_e$ by $(\partial A_e)_1$ and $(\partial A_e)_2$ respectively. 

\begin{lemma} \label{Lem:BothNontrivialInTheSameTorus}
Both boundary components $(\partial A_e)_1$ and $(\partial A_e)_2$ of the essential annulus must be nontrivial loops that lie in the same torus boundary component of $M$. 
\end{lemma}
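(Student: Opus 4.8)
The plan is to break the statement into its two assertions—nontriviality of each boundary curve, and the fact that both curves lie on a single torus—and to handle them in that order, since the first is needed to apply \reflem{NonHmtpCurves} in the second.

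First I would establish nontriviality. Since $A_e$ is essential it is in particular incompressible, so the inclusion induces an injection $\pi_1(A_e)\hookrightarrow\pi_1(M)$. The group $\pi_1(A_e)\cong\ZZ$ is generated by the core curve, which is freely homotopic inside $A_e$ to each of $(\partial A_e)_1$ and $(\partial A_e)_2$. Injectivity forces the core to represent a nontrivial element of $\pi_1(M)$, hence each boundary component is homotopically nontrivial in $M$. A curve that were trivial on its torus boundary component would be null-homotopic in $\partial M\subset M$, and hence in $M$; so nontriviality in $M$ gives nontriviality in $\partial M$. Because $A_e$ is properly and embeddedly positioned, each $(\partial A_e)_i$ is an embedded, and therefore simple, closed curve on a torus of $\partial M$; being nontrivial, it is a primitive $(p,q)$-curve with $(p,q)\neq(0,0)$ on the relevant rod. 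This is exactly the input required by \reflem{NonHmtpCurves}.

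Next I would rule out the two boundary curves lying on distinct tori. The annulus $A_e\cong S^1\times[0,1]$ provides a free homotopy in $M$ between $(\partial A_e)_1$ and $(\partial A_e)_2$. Suppose for contradiction that these lie on the boundary tori of two \emph{distinct} rods $R_1,R_2\in\{R_x,R_y,R_z,R\}$. The three standard rods have directions $(1,0,0)$, $(0,1,0)$, $(0,0,1)$ and are pairwise non-parallel; moreover $R$, being parallel to $(a,b,c)$ with $(a,b,c)\notin\{(\pm1,0,0),(0,\pm1,0),(0,0,\pm1)\}$, is non-parallel to each standard rod. Hence any two distinct rods among the four are non-parallel, so \reflem{NonHmtpCurves} applies to $R_1$ and $R_2$. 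Since $M\subseteq\TT^3\setminus(R_1\cup R_2)$, the free homotopy in $M$ descends to one in $\TT^3\setminus(R_1\cup R_2)$, making a nontrivial $(p,q)$-curve on $R_1$ homotopic to a nontrivial $(r,s)$-curve on $R_2$—precisely what \reflem{NonHmtpCurves} forbids. This contradiction shows the two boundary curves lie on the same torus component of $\partial M$.

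The main obstacle is the second step: one must be careful that the non-parallel hypothesis of \reflem{NonHmtpCurves} genuinely holds for \emph{every} pair among the four rods, which is where the excluded cases for $(a,b,c)$ are essential, and that the homotopy supplied by the annulus in $M$ legitimately passes to $\TT^3\setminus(R_1\cup R_2)$ via the inclusion. Both points are routine once the setup is in place, so the argument reduces to verifying the pairwise non-parallelism and correctly invoking the earlier lemma; the first step is a direct consequence of incompressibility.
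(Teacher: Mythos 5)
Your proposal is correct and follows essentially the same route as the paper: both establish nontriviality of the boundary curves from incompressibility of $A_e$ (the paper phrases this geometrically, pushing a hypothetical disc in $\partial M$ off the boundary to produce a compressing disc, whereas you use the equivalent $\pi_1$-injectivity formulation), and both then use the free homotopy provided by the annulus together with \reflem{NonHmtpCurves} to force the two curves onto a single boundary torus. Your explicit verification that every pair of distinct rods among $R_x, R_y, R_z, R$ is non-parallel (using the excluded values of $(a,b,c)$), and that the homotopy in $M$ passes to $\TT^3\setminus(R_1\cup R_2)$ by inclusion, makes precise what the paper leaves implicit.
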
 

\begin{proof}
First note that $(\partial A_e)_1$ and $(\partial A_e)_2$ are nontrivial loops in $\partial M$, for if $(\bdy A_e)_j$ bounds a disc on $\bdy M$, then slightly pushing the disc off of $\bdy M$ gives a compressing disc for $A_e$, contradicting the assumption that $A_e$ is essential. 

The annulus $A_e$ provides a homotopy between $(\bdy A_e)_1$ and $(\bdy A_e)_2$ in $M$. Since $(\partial A_e)_1$ and $(\partial A_e)_2$ are nontrivial loops that are homotopic, \reflem{NonHmtpCurves} implies they cannot lie on distinct rods. \end{proof}

Let $(\bdy M)_i$ denote the torus boundary component of $M$ that contains the two boundary components of $A_e$. 
Because $(\bdy A_e)_1$ and $(\bdy A_e)_2$ are disjoint nontrivial curves on the same torus, they divide $(\bdy M)_i$ into two annuli. Denote by $A^*$ and $A^{**}$ the two annuli in $(\bdy M)_i \setminus ((\bdy A_e)_1 \cup (\bdy A_e)_2)$. Also, denote by $R_i\in \{R_x, R_y, R_z, R\}$ and $L_i\in\{C_x, C_y, C_z, h(R)\}$ the link components in $\TT^3$ and $\SS^3$ respectively such that $h(\bdy \overline{N}(R_i)) = h((\bdy M)_i) = \bdy \overline{N}(L_i)$. 

\begin{lemma} \label{Lem:NotBoundaryParallel}
The tori $(A_e\cup A^*)$ and $(A_e\cup A^{**})$ are not boundary-parallel.
\end{lemma}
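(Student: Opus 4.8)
The plan is to argue by contradiction. Suppose, say, the torus $A_e\cup A^*$ is boundary-parallel in $M$; the argument for $A_e\cup A^{**}$ is identical with the roles of $A^*$ and $A^{**}$ interchanged. After pushing the interior of $A^*$ slightly off of $\bdy M$, boundary-parallelism means the resulting torus cobounds a product region $W\cong T^2\times I$ with some torus boundary component of $M$, which by the notation set up above is $\bdy\overline{N}(R_j)$ for one of the four rods $R_j\in\{R_x,R_y,R_z,R\}$. I would split into two cases according to whether this component is the component $(\bdy M)_i$ carrying $\bdy A_e$, or a different one.

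First I would dispose of the case $R_j\neq R_i$. Let $\gamma$ be the core curve of $A^*$. On the torus $A_e\cup A^*$ the curve $\gamma$ is parallel to the gluing circle $(\bdy A_e)_1$, hence essential, and on $(\bdy M)_i$ it is a nontrivial $(p,q)$-curve on $R_i$ (nontrivial by \reflem{BothNontrivialInTheSameTorus}). A product region $T^2\times I$ carries an essential curve on one boundary torus to an essential curve on the other, so $\gamma$ would be homotopic in $M$ to a nontrivial $(r,s)$-curve on $R_j$. Since $M\subset\TT^3\setminus(R_i\cup R_j)$, this homotopy persists in $\TT^3\setminus(R_i\cup R_j)$. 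But the four rods are pairwise non-parallel: $R$ is parallel to $(a,b,c)$, which by hypothesis is not $(\pm1,0,0)$, $(0,\pm1,0)$, or $(0,0,\pm1)$, so $R$ is not parallel to any standard rod, and the standard rods are pairwise non-parallel. Hence \reflem{NonHmtpCurves} forbids such a homotopy, ruling out $R_j\neq R_i$.

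Next I would treat the case $R_j=R_i$. Here $A_e\cup A^*$ cobounds a product region $W\cong T^2\times I$ with $(\bdy M)_i$, where $(\bdy M)_i=A^*\cup A^{**}$ while $A_e\cup A^*=A_e\cup A^*$. Choosing the product structure on $W$ to extend the obvious collar structure on the slab joining $A^*\subset(\bdy M)_i$ to its pushoff, and removing that collar, leaves a product region of the form $A^{**}\times I$ whose two ends are $A^{**}$ and $A_e$. This exhibits $A_e$ as parallel into $\bdy M$, contradicting the assumption that $A_e$ is essential and hence not boundary-parallel.

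I expect the case $R_j=R_i$ to be the main obstacle: the point requiring care is that the torus $A_e\cup A^*$ genuinely shares the annulus $A^*$ with $(\bdy M)_i$, so one must make precise what boundary-parallelism means for such a torus and verify that the parallelism restricts to a parallelism between $A_e$ and $A^{**}$. This is exactly the compatibility of the $T^2\times I$ product structure with a collar of the shared annulus $A^*$. By contrast, the case $R_j\neq R_i$ reduces cleanly to \reflem{NonHmtpCurves} once one observes that all four rods are pairwise non-parallel.
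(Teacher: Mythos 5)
Your proposal is correct and takes essentially the same approach as the paper: the case $R_j\neq R_i$ matches the paper's Case~2, transporting a nontrivial curve on $\bdy \overline{N}(R_i)$ to a nontrivial curve on $\bdy \overline{N}(R_j)$ and invoking \reflem{NonHmtpCurves} via the pairwise non-parallelism of the four rods. Your case $R_j=R_i$ is the paper's Case~1, where your collar-compatibility discussion merely makes explicit the paper's one-line assertion that parallelism of the torus $A_e\cup A^*$ into $(\bdy M)_i$ forces $A_e$ itself to be boundary-parallel, contradicting essentiality.
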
 

\begin{proof}
Suppose not. Without loss of generality, assume $(A_e\cup A^*)$ is boundary-parallel in $M$; the case of $(A_e\cup A^{**})$ is similar. Then $(A_e\cup A^*)$ can be isotoped to $\bdy \overline{N}(R_k)$ for some link component $R_k \in \{R_x, R_y, R_z, R\}$. 

\textbf{Case 1}: Suppose $R_k = R_i$. Then $(A_e\cup A^*)$ can be isotoped to $\bdy \overline{N}(R_i)$ in $M$. Thus, the annulus $A_e$ can also be isotoped to $\bdy \overline{N}(L_i)$ in $M$, contradicting the assumption that $A_e$ is essential. 

\textbf{Case 2}: Suppose $R_k \neq R_i$. Note that $\bdy A_e \subset \bdy \overline{N}(R_i)$. The isotopy taking $(A_e\cup A^*)$ to $\bdy \overline{N}(R_k)$ restricts to take $(\partial A_e)_1$ to a curve on $\bdy \overline{N}(R_k)$. Observe it must be a nontrivial curve, by incompressibility of $A_e$. Additionally, by \reflem{BothNontrivialInTheSameTorus}, $(\bdy A_e)_1$ is a nontrivial $(p,q)$-curve in $\bdy \overline{N}(R_i)$. Then $(\bdy A_e)_1$ is homotopic to a nontrivial loop in $\bdy \overline{N}(R_k)$, contradicting \reflem{NonHmtpCurves}. 
\end{proof} 

\begin{proof} [Proof of \refthm{Anannular}]
Since $h$ induces a homeomorphism from $M$ to $M'$, by \refthm{Atoroidal}, the tori $h(A_e\cup A^*)$ and $h(A_e\cup A^{**})$ are not essential in $M'$. Thus, $h(A_e\cup A^*)$ and $h(A_e\cup A^{**})$ are compressible or boundary-parallel. They cannot be boundary-parallel, by \reflem{NotBoundaryParallel}. Therefore, each is compressible. 

Consider first $h(A_e\cup A^*)$.
Let $D$ be a compressing disc for the torus $h(A_e\cup A^{*})$ in $M'$. Note $\bdy D$ cannot lie completely in $h(A_e)$, else it gives a compressing disc for $h(A_e)$, which is incompressible by assumption. Thus $\bdy D$ meets $h(A^*)$. 

Now surger $h(A_e\cup A^*)$ along $D$ to obtain a sphere $S$. By \refprop{IrredABdryIrred}, the sphere $S$ must bound a $3$-ball in $M'$. This ball must be disjoint from the link components $C_x$, $C_y$, $C_z$, and $h(R)$. This implies it cannot contain $D$, since $\bdy D$ meets $h(A^*)$, which forms part of $\bdy \overline{N}(L_i)$. Then undoing the surgery turns the ball into a solid torus $V^*$ bounded by $h(A_e\cup A^*)$, disjoint from the link components.

Applying similar argument to $h(A_e\cup A^{**})$, we find $h(A_e\cup A^{**})$ also bounds a solid torus $V^{**}$ disjoint from all the link components.

But now, $V^*\cup V^{**}$ is a submanifold of $M'$ with boundary $h(A^*\cup A^{**}) = \bdy \overline{N}(L_i)$ that contains none of the link components $C_x$, $C_y$, $C_z$, or $h(R)$. But $N(L_i)$ contains only one of these link components. This is a contradiction. Hence, the $3$-manifold 
$\TT^3\setminus(R_x \cup R_y \cup R_z \cup R)$ is anannular.
\end{proof}

\subsection{A family of hyperbolic links in the $3$-torus} \label{Sec:MainResult}

\begin{proof}[Proof of \refthm{MainCharacterizeGeometryR}]
Suppose $(a,b,c) \in \{(\pm{1},0,0),(0,\pm{1},0),(0,0,\pm{1})\}$, and $R$ is a rod parallel to the vector $(a,b,c)$ in $\TT^3$. By \refprop{RParallelStdRod}, $\TT^3\setminus (R_x \cup R_y \cup R_z \cup R)$ does not admit any complete hyperbolic structure. 

Now suppose $(a,b,c)\not\in \{(\pm{1},0,0),(0,\pm{1},0),(0,0,\pm{1})\}$. 
Consider the compact manifold
$ M\coloneqq \TT^3\setminus N(R_x \cup R_y \cup R_z \cup R)$.
By \refprop{IrredABdryIrred}, $M$ is irreducible and $\partial$-irreducible. 
By \refthm{Atoroidal}, $M$ is atoroidal. By \refthm{Anannular}, $M$ is anannular.
Therefore Thurston's hyperbolisation theorem implies that the interior of $M$ admits a complete hyperbolic structure. 
\end{proof}

\bibliographystyle{amsplain}  
\bibliography{references}

\end{document}